  \pgfplotsset{compat = 1.13,
    colormap name = viridis,
    unbounded coords = jump
  }
  \tikzset{external/system call = {%
    pdflatex \tikzexternalcheckshellescape
      -halt-on-error
      -interaction=batchmode
      -jobname "\image" "\texsource"}}
\newcommand{%
  \tikzexternalenable%
  \tikzsetnextfilename{}%
  \input{graphics/.tikz}%
  \tikzexternaldisable%
}[1]{%
  \tikzexternalenable%
  \tikzsetnextfilename{#1}%
  \input{graphics/#1.tikz}%
  \tikzexternaldisable%
}
\theoremstyle{plain}\newtheorem{theorem}{Theorem}
\theoremstyle{plain}\newtheorem{corollary}{Corollary}
\theoremstyle{plain}\newtheorem{proposition}{Proposition}
\theoremstyle{definition}\newtheorem{definition}{Definition}
\theoremstyle{remark}\newtheorem{remark}{Remark}
\renewcommand{\rm}[1]{\ensuremath{\mathrm{#1}}}
\newcommand{\R}{\ensuremath{\mathbb{R}}}
\newcommand{\C}{\ensuremath{\mathbb{C}}}
\newcommand{\N}{\ensuremath{\mathbb{N}}}
\newcommand{\Bu}{\ensuremath{B_{\rm{u}}}}
\newcommand{\Cp}{\ensuremath{C_{\rm{p}}}}
\newcommand{\Cv}{\ensuremath{C_{\rm{v}}}}
\newcommand{\Hpp}{\ensuremath{H_{\rm{pp}}}}
\newcommand{\Hpv}{\ensuremath{H_{\rm{pv}}}}
\newcommand{\Hvp}{\ensuremath{H_{\rm{vp}}}}
\newcommand{\Hvv}{\ensuremath{H_{\rm{vv}}}}
\newcommand{\Np}{\ensuremath{N_{\rm{p}}}}
\newcommand{\Nv}{\ensuremath{N_{\rm{v}}}}
\newcommand{\Npj}[1]{\ensuremath{N_{\rm{p}, #1}}}
\newcommand{\Nvj}[1]{\ensuremath{N_{\rm{v}, #1}}}
\newcommand{\hV}{\ensuremath{\widehat{V}}}
\newcommand{\hG}{\ensuremath{\widehat{G}}}
\newcommand{\tM}{\ensuremath{\widetilde{M}}}
\newcommand{\tD}{\ensuremath{\widetilde{D}}}
\newcommand{\tV}{\ensuremath{\widetilde{V}}}
\newcommand{\cB}{\ensuremath{\mathcal{B}}}
\newcommand{\cC}{\ensuremath{\mathcal{C}}}
\newcommand{\cH}{\ensuremath{\mathcal{H}}}
\newcommand{\cK}{\ensuremath{\mathcal{K}}}
\newcommand{\cN}{\ensuremath{\mathcal{N}}}
\newcommand{\hcB}{\ensuremath{\widehat{\mathcal{B}}}}
\newcommand{\hcC}{\ensuremath{\widehat{\mathcal{C}}}}
\newcommand{\hcH}{\ensuremath{\widehat{\mathcal{H}}}}
\newcommand{\hcK}{\ensuremath{\widehat{\mathcal{K}}}}
\newcommand{\hcN}{\ensuremath{\widehat{\mathcal{N}}}}
\newcommand{\hy}{\ensuremath{\hat{y}}}
\newcommand{\Linf}{\ensuremath{\mathcal{L}_{\infty}}}
\newcommand{\symTF}[1]{\ensuremath{G_{\mathrm{sym},#1}}}
\newcommand{\symtf}[1]{\ensuremath{g_{\mathrm{sym},#1}}}
\newcommand{\genTF}[2]{\ensuremath{G_{\mathrm{gen},#1}^{\mathrm{#2}}}}
\newcommand{\hsymTF}[1]{\ensuremath{\hG_{\mathrm{sym},#1}}}
\newcommand{\hgenTF}[2]{\ensuremath{\hG_{\mathrm{gen},#1}^{\mathrm{#2}}}}
\newcommand{\trans}{\ensuremath{\mkern-1.5mu\mathsf{T}}}
\newcommand{\herm}{\ensuremath{\mathsf{H}}}
\renewcommand{\i}{\ensuremath{\mathfrak{i}}}
\DeclareMathOperator{\dt}{d}
\DeclareMathOperator{\mspan}{span}
\DeclareMathOperator{\GP}{GP}
\DeclareMathOperator{\relerr}{relerr}
\DeclareMathOperator{\mvec}{vec}
\newcommand{\matlab}{\mbox{MATLAB}}
\newcommand{\symintVequi}{\texttt{SymInt(V, equi)}}
\newcommand{\symintVWequi}{\texttt{SymInt(VW, equi)}}
\newcommand{\symintVavg}{\texttt{SymInt(V, avg)}}
\newcommand{\symintVWavg}{\texttt{SymInt(VW, avg)}}
\newcommand{\genintVequi}{\texttt{GenInt(V, equi)}}
\newcommand{\genintVWequi}{\texttt{GenInt(VW, equi)}}
\newcommand{\genintVavg}{\texttt{GenInt(V, avg)}}
\newcommand{\genintVWavg}{\texttt{GenInt(VW, avg)}}
\renewcommand{\pod}{\texttt{POD}}
\newcommand{\podavg}{\texttt{POD(avg)}}
\definecolor{matlabblue}{HTML}{0072BD}
\definecolor{matlaborange}{HTML}{D95319}
\definecolor{matlabyellow}{HTML}{EDB120}
\definecolor{matlabpurple}{HTML}{7E2F8E}
\definecolor{matlabgreen}{HTML}{77AC30}
\definecolor{matlablightblue}{HTML}{4DBEEE}
\definecolor{matlabred}{HTML}{A2142F}
\tikzstyle{fom} = [
\tikzstyle{symint} = [
\tikzstyle{genint} = [
\tikzstyle{pod} = [
\tikzstyle{equi} = [
\tikzstyle{avg} = [
\tikzstyle{V} = [
\tikzstyle{VW} = [
\begin{document}


\title{Structured interpolation for multivariate transfer functions of
  quadratic-bilinear systems}
  
\author[$\ast$]{Peter Benner}
\affil[$\ast$]{Max Planck Institute for Dynamics of Complex Technical
  Systems, Sandtorstra{\ss}e 1, 39106 Magdeburg, Germany.
  \email{benner@mpi-magdeburg.mpg.de}, \orcid{0000-0003-3362-4103},
  \authorcr \itshape
  Otto von Guericke University Magdeburg, Faculty of Mathematics,
  Universit{\"a}tsplatz 2, 39106 Magdeburg, Germany.
  \email{peter.benner@ovgu.de}
}

\author[$\dagger$]{Serkan Gugercin}
\affil[$\dagger$]{%
  Department of Mathematics and Division of Computational Modeling and Data
  Analytics, Academy of Data Science, Virginia Tech,
  Blacksburg, VA 24061, USA.\authorcr
  \email{gugercin@vt.edu}, \orcid{0000-0003-4564-5999}
}

\author[$\ddagger$]{Steffen W. R. Werner}
\affil[$\ddagger$]{Courant Institute of Mathematical Sciences, New York
  University, New York, NY 10012, USA.\authorcr
  \email{steffen.werner@nyu.edu}, \orcid{0000-0003-1667-4862}
}
  
\shorttitle{Structured quadratic-bilinear interpolation}
\shortauthor{P. Benner, S. Gugercin, S. W. R. Werner}
\shortdate{2023-04-27}
\shortinstitute{}
  
\keywords{%
  model order reduction,
  quadratic-bilinear systems,
  structure-pre\-ser\-ving approximation,
  multivariate interpolation
}

\msc{%
  30E05, 
  34K17, 
  65D05, 
  93C10, 
  93A15  
}
  
\abstract{%
  High-dimensional/high-fidelity nonlinear dynamical systems appear naturally
  when the goal is to accurately model real-world phenomena. 
  Many physical properties are thereby encoded in the internal differential 
  structure of these resulting large-scale nonlinear systems.
  The  high-dimensionality of the dynamics causes computational bottlenecks,
  especially when these large-scale systems need to be simulated for a variety
  of situations such as different forcing terms.
  This motivates model reduction where the goal is to replace the full-order
  dynamics with accurate reduced-order surrogates. 
  Interpolation-based model reduction has been proven to be an effective tool
  for the construction of cheap-to-evaluate surrogate models that preserve the
  internal structure in the case of weak nonlinearities.  
  In this paper, we consider the construction of multivariate interpolants in
  frequency domain for structured quadratic-bilinear systems.
  We propose definitions for structured variants of the symmetric subsystem and
  generalized transfer functions of quadratic-bilinear systems and provide
  conditions for structure-preserving interpolation by projection.
  The theoretical results are illustrated using two numerical examples including
  the simulation of molecular dynamics in crystal structures.
}

\novelty{%
  We introduce new formulas for structured subsystem transfer functions to
  describe quadratic-bilinear systems with internal differential structures
  in the frequency domain.
  We formulate conditions on projection spaces to enforce structure-preserving
  interpolation for such transfer functions allowing for structure-preserving
  model reduction of quadratic-bilinear systems.
}

\maketitle



\section{Introduction}%
\label{sec:intro}

The accurate modeling of real-world phenomena and processes yields dynamical
systems typically including nonlinearities.
Additionally, these systems often inherit some internal structure from the
underlying physical nature of the considered problem.
An example for such internal structures is the description of internal
system states by second-order time derivatives as it is usually the case in
the modeling of mechanical structures.
Such nonlinear mechanical systems take the form
\begin{equation} \label{eqn:snsys}
  \begin{aligned}
    \tM \ddot{q}(t) & = f\big(q(t), \dot{q}(t), u(t) \big), \\
    y(t) & = \Cp q(t) + \Cv \dot{q}(t),
  \end{aligned}
\end{equation}
with internal states $q(t) \in \R^{\ell}$, describing the system behavior,
the external controls $u(t) \in \R^{m}$ that allow the user to change the
internal behavior, and the quantities of interest $y(t) \in \R^{p}$ that can
be observed from the outside, e.g., by sensor measurements.
Thereby, the first equation in~\cref{eqn:snsys} is a second-order differential
equation with mass (descriptor) matrix $\tM \in \R^{\ell \times \ell}$ and the
nonlinear time evolution function
$f\colon \R^{\ell} \times \R^{\ell} \times \R^{m} \to \R^{\ell}$.
The second, algebraic equation describes the quantities of interest as
linear combination of the states and their first-order derivatives.
Throughout this paper, we assume for any system to have homogeneous initial
conditions.

The Toda lattice is an example of a nonlinear mechanical system of the
form~\cref{eqn:snsys}.
It is used in solid state mechanics to model the movement of
particles in a one-dimensional crystal structure~\cite{Tod67}; see
\Cref{fig:todalattice}.
The nonlinear time evolution function contains exponential terms that
describe the forces between the different particles:
\begin{equation} \label{eqn:todalattice}
  f\big(q(t), \dot{q}(t), u(t) \big) = -\tD \dot{q}(t) -
    \begin{bmatrix} e^{k_{1}(q_{1}(t) - q_{2}(t))} - 1 \\
      e^{k_{2}(q_{2}(t) - q_{3}(t))} - e^{k_{1}(q_{1}(t) - q_{2}(t))} \\
      \vdots \\
      e^{k_{\ell} q_{\ell}(t)} - e^{k_{\ell-1}(q_{\ell-1}(t) - q_{\ell}(t))}
    \end{bmatrix},
\end{equation}
with the positive semidefinite diagonal damping matrix
$\tD \in \R^{\ell \times \ell}$ and the positive stiffness coefficients
$k_{1}, \ldots, k_{\ell}$.
See~\cite[Sec.~1.3.3]{Wer21} for the derivation of the differential model from
the underlying Hamiltonian.

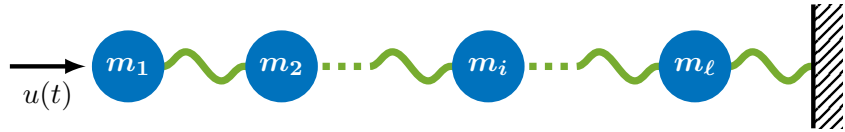
\begin{figure}[t]
  \centering
  \tikzexternalenable%
  \tikzsetnextfilename{todalattice}%
%
%
%

\begin{tikzpicture}[
  x = 2.845em,
  y = 2.845em,
  inner sep = 0,
  outer sep = 0,
  draw      = black]
  
  \colorlet{masscolor}{matlabblue}
  
  \tikzstyle{every node} = [color = black]
  
  \makeatletter
  \tikzset{
    hatch distance/.store in = \hatchdistance,
    hatch distance = .3em,
    hatch thickness/.store in = \hatchthickness,
    hatch thickness = .08em}
    \pgfdeclarepatternformonly[\hatchdistance,\hatchthickness]{north east hatch}
      {\pgfpoint{-.1em}{-.1em}}
      {\pgfpoint{\hatchdistance+.5*\hatchthickness}%
        {\hatchdistance+.5*\hatchthickness}}
      {\pgfpoint{\hatchdistance}{\hatchdistance}}{
        \pgfsetcolor{\tikz@pattern@color}
        \pgfsetlinewidth{\hatchthickness}
        \pgfpathmoveto{\pgfpoint{0em}{0em}}
        \pgfpathlineto{\pgfpoint{\hatchdistance}{\hatchdistance}}
        \pgfusepath{stroke}}
    \makeatother
  
  \tikzstyle{ground} = [
    fill,
    pattern         = north east hatch,
    hatch distance  = .39em,
    hatch thickness = .08em,
    draw            = none,
    minimum width   = 4.2675em,
    minimum height  = 1em,
    rotate          = -90
  ]

  \tikzstyle{mass} = [
    circle,
    minimum size = 2.4em,
    line width   = .08em,
    fill         = masscolor,
    draw         = masscolor,
    text         = white
  ]
  
  \tikzstyle{spring} = [
    line width = .25em,
    draw       = matlabgreen,
    decorate, 
    decoration = {
      snake,
      segment length = 2.45em,
      amplitude      = .5em}
  ]
  
  \node[mass, anchor = west] (mass1) {$\boldsymbol{m_{1}}$};
  
  \begin{pgfonlayer}{background}
    \draw[spring] (mass1.east) -- ++(2.845em, 0) node (tmp) {};
  \end{pgfonlayer}
  
  \node[mass, anchor = west] (mass2) at (tmp) {$\boldsymbol{m_{2}}$};
  
  \draw[
    line width   = .25em,
    dash pattern = on \pgflinewidth off .2845em,
    draw         = matlabgreen
  ] ($(mass2.east) + (.2em, 0)$) -- ++(1.4225em, 0) coordinate (tmp);
  
  \begin{pgfonlayer}{background}
    \draw[spring] ($(tmp) + (.2em, 0)$) -- ++(2.845em, 0) node (tmp) {};
  \end{pgfonlayer}
  
  \node[mass, anchor = west] (massi) at (tmp) {$\boldsymbol{m_{i}}$};
  
  \draw[
    line width   = .25em,
    dash pattern = on \pgflinewidth off .2845em,
    draw         = matlabgreen
  ] ($(massi.east) + (.2em, 0)$) -- ++(1.4225em, 0) coordinate (tmp);
  
  \begin{pgfonlayer}{background}
    \draw[spring] ($(tmp) + (.2em, 0)$) -- ++(2.845em, 0) node (tmp) {};
  \end{pgfonlayer}
  
  \node[mass, anchor = west] (massg) at (tmp) {$\boldsymbol{m_{\ell}}$};
  
  \begin{pgfonlayer}{background}
    \draw[spring] (massg.east) -- ++(2.845em, 0) node (tmp) {};
  \end{pgfonlayer}
  
  \node[ground, anchor = south] (wall) at (tmp) {};
  \draw[line width = .15em] (wall.south west) -- (wall.south east);
  
  \draw[line width = .15em, -latex] ($(mass1.west) + (-2.845em, 0)$)
    -- ($(mass1.west) + (-.25em, 0)$)
    node [midway, below = .5em] {$u(t)$};
\end{tikzpicture}%
  \tikzexternaldisable%

  \caption{Schematic illustration of the Toda lattice with $\ell$
    particles~\cite{Wer21}.
    Atoms in a one-dimensional crystal structure are represented as point
    masses and connected by exponential springs modeling the forces between
    the particles.}
  \label{fig:todalattice}
\end{figure}

In many applications, in particular those involving discretizations of
partial differential equations, the number of differential equations $\ell$
in~\cref{eqn:snsys}, describing the internal system behavior, is large and
increases further with the demand for more accuracy.
However, an increasing amount of differential equations also leads to an
increasing demand for computational resources such as time and memory for
simulations of the models or their use in optimization.
A remedy to this problem is model order reduction, which aims for the
computation of cheap-to-evaluate surrogate models described by significantly
less differential equations, $r \ll \ell$, which approximate the input-to-output
behavior of the original system as
\begin{equation*}
  \lVert y - \hy \rVert \leq \tau \cdot \lVert u \rVert,
\end{equation*}
in some suitable norms for the output of the reduced model $\hy$ and all
admissible inputs $u$.

An established approach for model reduction of general (structured) nonlinear
systems such as~\cref{eqn:snsys} is proper orthogonal decomposition
(POD)~\cite{HinV05, KunV02, WilP02}, in which time simulations are used to
extract information about the system dynamics for the construction of a basis
matrix to project the system states.
Other approaches aim for the extension of balancing-related model reduction
to nonlinear systems using Gramians defined via time simulations as in the
empirical Gramian method~\cite{HimO13, LalMG99, LawMC05} or by new energy
measures~\cite{Sch93, KraGB22} to construct suitable projection matrices.
Nevertheless, a problem arising in the general system case is the approximation
of the nonlinear time evolution function $f$ in~\cref{eqn:snsys} that
circumvents the computationally expensive lifting and truncation of the
low-dimensional state in every time step.
A solution to that are hyperreduction techniques such as the (discrete)
empirical interpolation method
((D)EIM)~\cite{BarMNetal04, ChaS10, DrmG16, Peh20a}, which computes a selection
operator to restrict the evaluation of $f$ to its rows most contributing.
This introduces another layer of approximations and needs explicit access to the
implementation of the original time evolution function $f$.

An alternative to hyperreduction that gained significant popularity in model
reduction in the last decade is quadratic-bilinearization~\cite{Gu11}; in
optimization also known as McCormick relaxation~\cite{McC76}:
For $f$ smooth enough, general nonlinear systems can be rewritten into
quadratic-bilinear form by introducing auxiliary variables and
differential-algebraic equations, which then can be reduced directly using the
classical projection-based model reduction approach.
In the case of~\cref{eqn:snsys}, the corresponding quadratic-bilinear system
retains the internal mechanical structure as
\begin{equation} \label{eqn:sqsys}
  \begin{aligned}
    0 & = M \ddot{q}(t) + D \dot{q}(t) + K q(t) \\
    & \quad{}+{}
      \Hvv\big(\dot{q}(t) \otimes \dot{q}(t)\big) +
      \Hvp\big(\dot{q}(t) \otimes q(t)\big)\\
    &\quad{}+{}
      \Hpv\big(q(t) \otimes \dot{q}(t)\big) +
      \Hpp\big(q(t) \otimes q(t)\big)\\
    & \quad{}-{}
      \sum_{j = 1}^{m} \Nvj{j} \dot{q}(t) u_{j}(t) -
      \sum_{j = 1}^{m} \Npj{j} q(t) u_{j}(t) - \Bu u(t),\\
    y(t) & = \Cp q(t) + \Cv \dot{q}(t),
  \end{aligned}
\end{equation}
where $M, D, K, \Nvj{j}, \Npj{j} \in \R^{n \times n}$, for $j = 1, \ldots, m$,
$\Hvv, \Hvp, \Hpv, \Hpp \in \R^{n \times n^{2}}$, $\Bu \in \R^{n \times m}$,
$\Cp, \Cv \in \R^{p \times n}$, and $\otimes$ denotes the Kronecker product.
Due to the introduction of auxiliary variables, we have that $n \geq \ell$, 
which appears counter-intuitive to the actual task of reducing the number of
internal system states in model reduction.
However, the new nonlinearity structure of~\cref{eqn:sqsys} allows to apply
well-established model reduction techniques without the necessity of the
hyperreduction step for the nonlinearity.
The Toda lattice model (\Cref{fig:todalattice}) can also be rewritten
into~\cref{eqn:sqsys}.
The process of quadratic-bilinearization and the derivation of several
structured quadratic-bilinear formulations for the Toda lattice model are
shown in~\cite[Sec.~6.2]{Wer21}.
Furthermore, we refer the reader to the numerical experiments in
\Cref{subsec:todalattice} for more details.

So far, the literature on the model reduction of quadratic-bilinear systems
mainly considered the case of unstructured, first-order systems of the form
\begin{equation} \label{eqn:qsys}
  \begin{aligned}
    E \dot{x}(t) & = A x(t) + H \big( x(t) \otimes x(t) \big) +
      \sum\limits_{j = 1}^{m} N x(t) u_{j}(t) + B u(t),\\
    y(t) & = C x(t),
  \end{aligned}
\end{equation}
with $E, A, N_{j} \in \R^{n \times n}$, for $j = 1, \ldots, m$,
$H \in \R^{n \times n^{2}}$, $B \in \R^{n \times m}$ and
$C \in \R^{p \times n}$.
Model reduction methods developed for~\cref{eqn:qsys} among others include
the interpolation of multivariate subsystem transfer
functions~\cite{Gu11, AhmBJ16, BenB15, AntBG20},
Volterra series interpolation~\cite{BenGG18, AntBG20},
balanced truncation~\cite{BenG17},
learning models from frequency domain data via the Loewner
framework~\cite{GosA18}
or learning models from time domain data by operator
inference~\cite{PehW16, QiaKPetal20}.
The reformulation of general nonlinear systems into quadratic-bilinear
form~\cref{eqn:qsys} has also been proven to be an effective strategy for
classical nonlinear model reduction methods such as POD~\cite{KraW19}.

In this paper, we extend the idea of quadratic-bilinear subsystem interpolation
to systems with additional internal differential structures such as in the
mechanical case~\cref{eqn:sqsys}.
We propose extensions for the definitions of the first three symmetric
subsystem transfer functions and the first three generalized transfer functions
to the structured system case and then present subspace conditions for
structure-preserving interpolation of these transfer functions.
The effectiveness of resulting model reduction methods based on this
interpolation theory is illustrated on two different structured examples
including the Toda lattice model above.
Parts of the theoretical results presented here were derived in the course of
writing the dissertation of the corresponding author~\cite{Wer21}.

The rest of the paper is organized as follows:
In \Cref{sec:basics}, we recap the ideas of Volterra series expansions and
unstructured quadratic-bilinear systems in the frequency domain.
We present the definitions of structured transfer functions of
quadratic-bilinear systems in \Cref{sec:structtf} and the results on
structure-preserving interpolation in \Cref{sec:interp}.
We employ the interpolation results for model reduction of two structured
numerical examples in \Cref{sec:examples}.
The paper is concluded in \Cref{sec:conclusions}.


\section{Mathematical preliminaries}%
\label{sec:basics}

In this section, we recap the concept of Volterra series for quadratic-bilinear
systems and two resulting transfer function formulations.


\subsection{Volterra series expansions}%
\label{subsec:volterra}

The Volterra series expansion allows to describe the solution of
nonlinear dynamical systems as a series of solutions of coupled linear
systems~\cite{Rug81}.
A common approach to derive the Volterra series expansion is by variational
analysis~\cite{Gu11}.
Let a scaled input signal $\alpha u(t)$, with $\alpha > 0$, be given for the
quadratic-bilinear system~\cref{eqn:qsys} and assume the system state to have
an analytic representation of the form
\begin{equation} \label{eqn:fqstate}
  x(t) = \sum\limits_{k = 1}^{\infty} \alpha^{k} x_{k}(t),
\end{equation}
with a sequence of states $x_{k}(t)$.
Inserting~\cref{eqn:fqstate} into~\cref{eqn:qsys} and extracting the terms
corresponding to the same power of the coefficient $\alpha$ yields the
states $x_{k}(t)$ to be described by cascaded subsystems, which are linear
in their respective unknown state $x_{k}(t)$.
For example, the first three resulting linear subsystems for~\cref{eqn:fqstate}
are given by
\begin{equation} \label{eqn:subqsys}
  \begin{aligned}
    E \dot{x}_{1}(t) & = A x_{1}(t) + B u(t), \\
    E \dot{x}_{2}(t) & = A x_{2}(t) + H
      \big( x_{1}(t) \otimes x_{1}(t) \big) + \sum\limits_{j = 1}^{m}
      N_{j} x_{1}(t) u_{j}(t),\\
    E \dot{x}_{3}(t) & = A x_{3}(t) + H
      \big( x_{1}(t) \otimes x_{2}(t) + x_{2}(t) \otimes x_{1}(t) \big)
      + \sum\limits_{j = 1}^{m} N_{j} x_{2}(t) u_{j}(t).
  \end{aligned}
\end{equation}
Applying the variation-of-constants formula to the subsystems
in~\cref{eqn:subqsys} allows the description of the input-to-output behavior
of~\cref{eqn:qsys} via its Volterra series expansion:
\begin{equation} \label{eqn:fqsysvolterra}
  y(t) = \sum\limits_{k = 1}^{\infty} \int\limits_{0}^{t}
    \int\limits_{0}^{t_{1}} \cdots \int\limits_{0}^{t_{k-1}}
    g_{k}(t_{1}, \ldots, t_{k}) \big( u(t - t_{1}) \otimes \cdots \otimes
    u(t - t_{k}) \big) \dt{t_{k}} \cdots \dt{t_{1}},
\end{equation}
where $g_{k}(t_{1}, \ldots, t_{k})$ are the Volterra kernels of the
corresponding representation.
The kernels used in~\cref{eqn:fqsysvolterra} are of the symmetric
type~\cite{Rug81, Gu11}.
Applying the multivariate Laplace transformation~\cite{Rug81}
to~\cref{eqn:fqsysvolterra} results in an equivalent description of the
quadratic-bilinear system~\cref{eqn:qsys} in the frequency domain by
multivariate transfer functions.


\subsection{Subsystem transfer functions of quadratic-bilinear systems}%
\label{subsec:tf}

In this work, we restrict ourselves to the presentation of the transfer
functions corresponding to the first three coupled linear subsystems for
brevity and practical relevance.
General formulas for arbitrarily high levels of multivariate transfer functions
for~\cref{eqn:qsys} have been developed in~\cite[Sec.~2.3.2]{Wer21}.


\subsubsection{Symmetric subsystem transfer functions}

The symmetric subsystem transfer functions are based on the symmetric Volterra
kernels from~\cite{Rug81}; cf.~\cref{eqn:fqsysvolterra}.
Historically, this is the first transfer function type that has been
investigated for the model reduction of~\cref{eqn:qsys} in~\cite{Gu11}.
Here, the term ``symmetric'' refers to the fact that the transfer function is
invariant with respect to the order of its arguments. 

The first symmetric subsystem transfer function corresponds to the linear
part of~\cref{eqn:qsys} as it can be seen in~\cref{eqn:subqsys} such that
\begin{equation} \label{eqn:qsymtf1}
  \symTF{1}(s_{1}) = C \symtf{1}(s_{1}) = C (s_{1}E - A)^{-1} B,
\end{equation}
with $s_{1} \in \C$.
Thereby, the term $\symtf{1}(s_{1}) \in \C^{n \times m}$ is used in the
following for notational convenience and denotes the input-to-state transition
of the first subsystem.
The second symmetric subsystem transfer function depends on two complex
frequency arguments $s_{1}, s_{2} \in \C$ and is given by
\begin{equation} \label{eqn:qsymtf2}
  \symTF{2}(s_{1}, s_{2}) = C \symtf{2}(s_{1}, s_{2}),
\end{equation}
where the function describing the input-to-state transition on the right-hand
side of~\cref{eqn:qsymtf2} is given by
\begin{equation*}
  \begin{aligned}
    \symtf{2}(s_{1}, s_{2}) & =
        \frac{1}{2} \big((s_{1}+s_{2})E - A \big)^{-1}\\
    & \quad{}\times{}
        \Big( H \big( \symtf{1}(s_{1}) \otimes \symtf{1}(s_{2}) + 
        \symtf{1}(s_{2}) \otimes \symtf{1}(s_{1}) \big)\\
    & \qquad{}+{}
      N \big( I_{m} \otimes \big( \symtf{1}(s_{1}) +
      \symtf{1}(s_{2}) \big) \big) \Big),
  \end{aligned}
\end{equation*}
with $\symtf{1}$ from~\cref{eqn:qsymtf1}, $I_{m}$ denoting the $m$-dimensional
identity matrix and the column concatenation of the bilinear terms as
\begin{equation} \label{eqn:qbilinconcat}
  N = \begin{bmatrix} N_{1} & N_{2} & \ldots & N_{m} \end{bmatrix}.
\end{equation}
Last, the third symmetric subsystem transfer function is defined similarly
to~\cref{eqn:qsymtf2} by
\begin{equation} \label{eqn:qsymtf3}
  \symTF{3}(s_{1}, s_{2}, s_{3}) = C \symtf{3}(s_{1}, s_{2}, s_{3}),
\end{equation}
with $s_{1}, s_{2}, s_{3} \in \C$ and the input-to-state transition
given via
\begin{equation*}
  \begin{aligned}
    \symtf{3}(s_{1}, s_{2}, s_{3}) &
      = \frac{1}{6} \big((s_{1}+s_{2}+s_{3})E - A \big)^{-1}\\
    &  \quad{}\times{}
        \Big( H \big( \symtf{1}(s_{1}) \otimes \symtf{2}(s_{2}, s_{3}) +
        \symtf{1}(s_{2}) \otimes \symtf{2}(s_{1}, s_{3})\\
      & \quad\qquad{}+{}
        \symtf{1}(s_{3}) \otimes \symtf{2}(s_{1}, s_{2}) +
        \symtf{2}(s_{1}, s_{2}) \otimes \symtf{1}(s_{3})\\
      & \quad\qquad{}+{}
        \symtf{2}(s_{1}, s_{3}) \otimes \symtf{1}(s_{2}) +
        \symtf{2}(s_{2}, s_{3}) \otimes \symtf{1}(s_{1}) \big) \\
      & \qquad{}+{}
        N \big( I_{m} \otimes \big(\symtf{2}(s_{1}, s_{2}) + 
        \symtf{2}(s_{1}, s_{3}) + \symtf{2}(s_{2}, s_{3}) \big) \big) \Big),
  \end{aligned}
\end{equation*}
using $\symtf{1}$ from~\cref{eqn:qsymtf1} and $\symtf{2}$
from~\cref{eqn:qsymtf2}.


\subsubsection{Generalized transfer functions}

In contrast to the symmetric case, the generalized transfer functions do not
directly correspond to a Volterra kernel representation.
They have been introduced in~\cite{GosA18} for the extension of the data-driven
Loewner framework to quadratic-bilinear systems and are inspired by the regular
transfer functions of bilinear systems, which only consist of products of the
terms of the dynamical system; see, e.g.,~\cite{BaiS06}.
The formulation given in~\cite{GosA18} for the single-input/single-output (SISO)
system case has been extended to multi-input/multi-output systems
in~\cite[Sec.~2.3.2]{Wer21}.

As in the symmetric case, the first regular transfer function corresponds to
the linear system components, with
\begin{equation} \label{eqn:qgentf1}
  \genTF{1}{(B)}(s_{1}) = C (s_{1} E - A)^{-1} B.
\end{equation}
Also the second regular transfer function is uniquely defined resulting from
one multiplication with the bilinear terms:
\begin{equation} \label{eqn:qgentf2}
  \genTF{2}{(N, (B))}(s_{1},s_{2}) = C (s_{2} E - A)^{-1} N
    \big(I_{m} \otimes (s_{1}E - A)^{-1}B \big).
\end{equation}
For the third level however, two different generalized transfer functions
exist.
The first one is identical to the third regular bilinear transfer function with
\begin{equation} \label{eqn:qgentf31}
  \begin{aligned}
    \genTF{3}{(N, (N, (B)))}(s_{1},s_{2},s_{3}) & = C (s_{3} E - A)^{-1} N
      \Big( I_{m} \otimes \big( (s_{2} E - A)^{-1}N\\ 
    & \quad{}\otimes{}
    (I_{m} \otimes (s_{1}E - A)^{-1}B) \big) \Big);
  \end{aligned}
\end{equation}
see also~\cite{BenGW21a}.
The second one involves the quadratic term and is given by
\begin{equation} \label{eqn:qgentf32}
  \genTF{3}{(H, (B), (B))}(s_{1},s_{2},s_{3}) = C (s_{3} E - A)^{-1} H
    \big((s_{2}E - A)^{-1}B \otimes (s_{1}E - A)^{-1}B \big).
\end{equation}
Note that the index levels of the generalized and symmetric transfer functions
do not coincide since the second symmetric subsystem transfer function does
contain the quadratic term in contrast to the second level generalized transfer
function, which has only the bilinear terms.


\section{Structured transfer functions of quadratic-bilinear systems}%
\label{sec:structtf}

In this section, we extend the transfer function formulations from
\Cref{sec:basics} to the setting of structured quadratic-bilinear systems
starting with the introductory example of quadratic-bilinear mechanical systems.
Based on this motivation, we introduce the formulas for structured symmetric
and generalized transfer functions before we consider the case of
quadratic-bilinear time-delay systems as another example for internal
differential structures at the end of this section.


\subsection{Transfer functions for mechanical systems}%
\label{subsec:mechtf}

In general, any system in second-order form~\cref{eqn:sqsys} can be rewritten
in first-order form~\cref{eqn:qsys} using the concatenated first-order
state $x(t) = \begin{bmatrix} q(t)^{\trans} & \dot{q}(t)^{\trans}
\end{bmatrix}^{\trans}$.
The first-order system matrices are then, for example, given by
\begin{equation} \label{eqn:sfmtx}
  \begin{aligned}
    E & = \begin{bmatrix} I_{n} & 0 \\ 0 & M \end{bmatrix}, &
      A & = \begin{bmatrix} 0 & I_{n} \\ -K & -D \end{bmatrix}, &
      B & = \begin{bmatrix} 0 \\ \Bu \end{bmatrix}, \\
      C & = \begin{bmatrix} \Cp & \Cv \end{bmatrix}, &
      N_{j} & = \begin{bmatrix} 0 & 0 \\ \Npj{j} & \Nvj{j} \end{bmatrix},
  \end{aligned}
\end{equation}
for $j = 1, \ldots, m$, with the quadratic term
\begin{equation} \label{eqn:sfH}
  H = -\begin{bmatrix} 0 & 0 & \ldots & 0 & 0 & 0 & 0 & \ldots & 0 & 0 \\
    H_{\rm{pp}, 1} & H_{\rm{pv}, 1} & \ldots & H_{\rm{pp}, n} & H_{\rm{pv}, n}
    & H_{\rm{vp}, 1} & H_{\rm{vv}, 1} & \ldots & H_{\rm{vp}, n} &
    H_{\rm{vv}, n} \end{bmatrix}.
\end{equation}
Thereby, the matrix blocks in~\cref{eqn:sfH} are $n \times n$ matrix slices
of the second-order quadratic terms in~\cref{eqn:sqsys}, e.g.,
for $\Hpp$ modeling the multiplication of the state with itself we have
\begin{equation*}
  \Hpp = \begin{bmatrix} H_{\rm{pp}, 1} & H_{\rm{pp}, 2} & \ldots &
    H_{\rm{pp}, n} \end{bmatrix},
\end{equation*}
with $H_{\rm{pp}, j} \in \R^{n \times n}$ for all $j = 1, \ldots, n$.

Now, we exploit the block structures of the matrices in~\cref{eqn:sfmtx,eqn:sfH}
to derive the symmetric and generalized transfer functions of~\cref{eqn:sqsys}.
In both transfer function cases, the first level transfer functions correspond
to the linear system case and it can be observed that for~\cref{eqn:sqsys}
it holds that
\begin{equation} \label{eqn:sqtf1}
  \begin{aligned}
    \symTF{1}(s_{1}) = \genTF{1}{(B)}(s_{1})
      & = (\Cp + s_{1} \Cv) \symtf{1}(s_{1})\\
    & = (\Cp + s_{1} \Cv) (s_{1}^{2} M + s_{1} D + K)^{-1} \Bu,
  \end{aligned}
\end{equation}
where $\symtf{1}(s_{1})$ denotes here the input-to-state transition of the
second-order state $q$.

For the next two levels, we concentrate first on the symmetric transfer
function case.
Inserting~\cref{eqn:sfmtx,eqn:sfH} into~\cref{eqn:qsymtf2} yields the
second symmetric subsystem transfer function of~\cref{eqn:sqsys} to be
\begin{equation} \label{eqn:sqsymtf2}
  \begin{aligned}
    \symTF{2}(s_{1}, s_{2}) & = 
      \big( \Cp + (s_{1} + s_{2}) \Cv \big) \symtf{2}(s_{1}, s_{2}) \\
    & = \frac{1}{2} \big( \Cp + (s_{1} + s_{2}) \Cv \big)
      \big( (s_{1} + s_{2})^{2} M + (s_{1} + s_{2}) D + K \big)^{-1} \\
    & \quad{}\times{}
      \Big(-(\Hpp + s_{2} \Hpv + s_{1} \Hvp + s_{1} s_{2} \Hvv)
      \big( \symtf{1}(s_{1}) \otimes \symtf{1}(s_{2}) \big) \\
    & \qquad{}-{}
      (\Hpp + s_{1} \Hpv + s_{2} \Hvp + s_{1} s_{2} \Hvv)
      \big( \symtf{1}(s_{2}) \otimes \symtf{1}(s_{1}) \big)\\
    & \qquad{}+{}
      (\Np + s_{1} \Nv) \big( I_{m} \otimes \symtf{1}(s_{1}) \big)\\
    & \qquad{}+{}
      (\Np + s_{2} \Nv) \big( I_{m} \otimes \symtf{1}(s_{2}) \big)
      \Big),
  \end{aligned}
\end{equation}
where $\symtf{2}$ denotes the input-to-state transition of the
second subsystem, $\symtf{1}$ is the input-to-state transition from
the first subsystem in~\cref{eqn:sqtf1}, and the bilinear terms are concatenated
as
\begin{equation} \label{eqn:sqbilinconcat}
  \begin{aligned}
    \Np & = \begin{bmatrix} \Npj{1} &  \ldots & \Npj{m} \end{bmatrix} &
    \text{and} &&
    \Nv & = \begin{bmatrix} \Nvj{1} & \ldots & \Nvj{m} \end{bmatrix}.
  \end{aligned}
\end{equation}
Similarly, inserting the block matrices~\cref{eqn:sfmtx,eqn:sfH} into
the unstructured third symmetric subsystem transfer function~\cref{eqn:qsymtf3}
leads to the structured third symmetric subsystem transfer function
representation for~\cref{eqn:sqsys}.
Due to the complexity of the resulting formula, we will not write it out
here explicitly but outline some of its features.
The third subsystem transfer function of~\cref{eqn:qsys} has a similar structure
to~\cref{eqn:qsymtf3,eqn:sqsymtf2} with linear combinations of the quadratic
and bilinear terms multiplied with the previous input-to-state transition terms
$\symtf{1}$ and $\symtf{2}$ from~\cref{eqn:sqtf1,eqn:sqsymtf2}.
Due to the occurrence of the sum of the frequency arguments in the first term
of~\cref{eqn:sqsymtf2}, this translates into the frequency dependence of the
second-order quadratic and bilinear terms such that terms of the forms
\begin{equation} \label{eqn:sqsymtf3Tmp}
  \begin{aligned}
    & -\big( \Hpp + s_{3} \Hpv + (s_{1} + s_{2}) \Hvp +
      (s_{1} + s_{2})s_{3} \Hvv \big)
      \big( \symtf{2}(s_{1}, s_{2}) \otimes \symtf{1}(s_{3}) \big)\\
    & \text{and}\quad \big(\Np + (s_{1} + s_{2}) \Nv \big)
      \big( I_{m} \otimes \symtf{2}(s_{1}, s_{2}) \big)
  \end{aligned}
\end{equation}
appear.
For more details, we refer the reader to the next section, which contains the
definitions of the transfer function formulas for general structures.

For the generalized transfer functions, one can observe that the second level
transfer function resembles the corresponding bilinear regular transfer
function, for which the structured system case has been developed
in~\cite{BenGW21a}.
The resulting transfer function for~\cref{eqn:sqsys} is thereby given as
\begin{equation} \label{eqn:sqgentf2}
  \begin{aligned}
    \genTF{2}{(N, (B))}(s_{1}, s_{2}) & =
      (\Cp + s_{2} \Cv) (s_{2}^{2} M + s_{2} D + K)^{-1} (\Np + s_{1} \Nv)\\
    & \quad{}\times{}
      \big( I_{m} \otimes (s_{1}^{2} M + s_{1} D + K)^{-1}
      \Bu \big),
  \end{aligned}
\end{equation}
where the bilinear terms are concatenated as in~\cref{eqn:sqbilinconcat}.
Similarly, the purely bilinear third level generalized transfer function
of~\cref{eqn:sqsys} is given by
\begin{equation} \label{eqn:sqgentf31}
  \begin{aligned}
    \genTF{3}{(N, (N, (B)))} & = (\Cp + s_{3} \Cv)
      (s_{3}^{2} M + s_{3} D + K)^{-1} (\Np + s_{2} \Nv)\\
    & \quad{}\times{}
      \Big( I_{m} \otimes (s_{2}^{2} M + s_{2} D + K)^{-1} (\Np + s_{1} \Nv) \\
    & \qquad{}\times{}
      \big( I_{m} \otimes (s_{1}^{2} M + s_{1} D + K)^{-1} \Bu \big) \Big).
  \end{aligned}
\end{equation}
On the other hand, the third level generalized transfer function
of~\cref{eqn:sqsys} involving the quadratic term can be derived by
inserting~\cref{eqn:sfmtx} into~\cref{eqn:qgentf32}, which yields
\begin{equation} \label{eqn:sqgentf32}
  \begin{aligned}
    \genTF{3}{(H, (B), (B))} (s_{1}, s_{2}, s_{3}) & =
      -(\Cp + s_{3} \Cv) (s_{3}^{2} M + s_{3} D + K)^{-1}\\
    & \quad{}\times{}
      (\Hpp + s_{1} \Hpv + s_{2} \Hvp + s_{1} s_{2} \Hvv)\\
    & \quad{}\times{}
      \big( (s_{2}^{2} M + s_{2} D + K)^{-1} \Bu \otimes 
      (s_{1}^{2} M + s_{1} D + K)^{-1} \Bu \big).
  \end{aligned}
\end{equation}

Overall, and similar to the linear and bilinear cases, we can observe the
occurrence of the same terms describing the linear, bilinear or quadratic
dynamics in the different transfer functions by means of the given system
matrices and the complex variables $s_{1}, s_{2}, s_{3}$.
This motivates the definitions for the general structured framework in the
upcoming section.


\subsection{Structured transfer function formulas for quadratic-bilinear
  systems}

Before deriving structured quadratic-bilinear transfer functions, 
we briefly recall the ideas from~\cite{BeaG09}, which considered the structured
transfer functions for linear dynamical systems in which
frequency-dependent equations are used for describing the dynamics. 
First, consider linear first-order (unstructured) systems
with the time domain representation
\begin{equation}  \label{eqn:linunst}
  \begin{aligned}
    E \dot{x}(t) - A x(t) & = B u(t), &
    y(t) = C x(t).
  \end{aligned}
\end{equation}
By taking the Laplace transform of~\cref{eqn:linunst}, the dynamical system
in~\cref{eqn:linunst} can be equivalently described in the frequency domain as
\begin{equation} \label{eqn:tflinunst}
  \begin{aligned}
    (sE - A) X(s) & = B U(s), &
    Y(s) = C X(s),
  \end{aligned}
\end{equation}
with $s \in \C$, where $U(s), X(s), Y(s)$ are the Laplace transforms of
the inputs $u(t)$, states  $x(t)$, and outputs $y(t)$, respectively.
Now consider a linear dynamical system with a second-order structure with the
time domain representation 
\begin{equation} \label{eqn:linstr}
  \begin{aligned}
    M \ddot{x}(t) + D \dot{x}(t) + K x(t) & = \Bu u(t), &
    y(t) = \Cp x(t) + \Cv \dot{x}(t).
  \end{aligned}
\end{equation}
As in the unstructured case, taking the Laplace transform of~\cref{eqn:linstr}
yields the representation in the frequency domain 
\begin{equation} \label{eqn:tflinst}
  \begin{aligned}
    (s^{2} M + s D + K) X(s) & = \Bu U(s), &
    Y(s) = (\Cp + s \Cv) X(s).
  \end{aligned}
\end{equation}
Observe that in both cases of~\cref{eqn:tflinunst} and~\cref{eqn:tflinst},  the
system states in the  frequency domain can be described as solution of
frequency-dependent linear systems of equations of the form
\begin{equation} \label{eqn:linstruct}
  \begin{aligned}
        \cK(s) X(s) & = \cB(s) U(s), & Y(s) & = \cC(s),
  \end{aligned}
\end{equation}
where the matrix-valued functions
$\cK\colon \C \to \C^{n \times n}$,
$\cB\colon \C \to \C^{n \times m}$ and
$\cC\colon \C \to \C^{p \times m}$ describe the linear dynamics, and
the input and output behavior of the system.
In particular, we recover~\cref{eqn:tflinunst} by setting 
$\cK(s) = sE - A$, $\cB(s) = B$, and  $\cC(s) = B$.
Similarly, we recover~\cref{eqn:tflinst} by setting
$\cK(s) = s^2M + s D +  K$, $\cB(s) = \Bu$, and  $\cC(s) = \Cp + s \Cv$.
We refer the reader to~\cite{BeaG09} for further examples of structured
dynamics that fit into the general framework of~\cref{eqn:linstruct}.
Then, for every $s \in \C$ for which $\cK(s)$ in~\cref{eqn:linstruct} is
invertible, the transfer function of the underlying linear dynamical system
is given by
\begin{equation} \label{eqn:lintf}
  G(s) = \cC(s) \cK(s)^{-1} \cB(s).
\end{equation}
For the extension to structured bilinear systems in~\cite{BenGW21a}, a new
frequency-dependent function $\cN\colon \C \to \C^{n \times nm}$ was
introduced, modeling the effect of the bilinear terms, where
\begin{equation*}
  \cN(s) = \begin{bmatrix} \cN_{1}(s) & \ldots & \cN_{m}(s) \end{bmatrix},
\end{equation*}
with $\cN_{j}\colon \C \to \C^{n \times n}$ for all $j = 1, \ldots, m$.
In this manuscript, we further extend the structured transfer function
framework to the  quadratic-bilinear case, which appears ubiquitously in
prominent applications as we briefly discussed in \Cref{sec:intro}.

First, we consider the symmetric transfer function case.
Inspired by~\cref{eqn:qsymtf1,eqn:qsymtf2,eqn:qsymtf3}, from the unstructured
first-order case, and~\cref{eqn:sqtf1,eqn:sqsymtf2,eqn:sqsymtf3Tmp}, we
introduce the following definition for the structured symmetric subsystem
transfer functions.

\begin{definition}\allowdisplaybreaks%
  \label{def:symTF}
  Given matrix-valued functions of the form
  $\cC\colon \C \to \C^{p \times m}$,
  $\cK\colon \C \to \C^{n \times n}$,
  $\cB\colon \C \to \C^{n \times m}$,
  $\cN\colon \C \to \C^{n \times nm}$,
  $\cH\colon \C \times \C \to \C^{n \times n^{2}}$,
  for which there exists an $s \in \C$ at which they can be evaluated and
  $\cK(s)$ is invertible.
  The first three \emph{structured symmetric subsystem transfer functions} are
  defined as
  \begin{align*}
    \symTF{1}(s_{1}) & = \cC(s_{1}) \symtf{1}(s_{1}), \\
    \symTF{2}(s_{1}, s_{2}) & = \cC(s_{1} + s_{2})
      \symtf{2}(s_{1}, s_{2}), \\
    \symTF{3}(s_{1}, s_{2}, s_{3}) & = \cC(s_{1} + s_{2} + s_{3})
      \symtf{3}(s_{1}, s_{2}, s_{3}),
  \end{align*}
  where the input-to-state transitions are recursively given by
  \begin{align*}
    \symtf{1}(s_{1}) & = \cK(s_{1})^{-1} \cB(s_{1}), \\
    \symtf{2}(s_{1}, s_{2}) & = \frac{1}{2}
      \cK(s_{1} + s_{2})^{-1}
      \Big( \cH(s_{1}, s_{2}) \big( \symtf{1}(s_{1}) \otimes
      \symtf{1}(s_{2}) \big) \\
    & \quad{}+{}
      \cH(s_{2}, s_{1}) \big( \symtf{1}(s_{2}) \otimes \symtf{1}(s_{1})
      \big) +
      \cN(s_{1}) \big( I_{m} \otimes \symtf{1}(s_{1}) \big) \\
    & \quad{}+{}
      \cN(s_{2}) \big( I_{m} \otimes \symtf{1}(s_{2}) \big) \Big),\\
    \symtf{3}(s_{1}, s_{2}, s_{3}) & = \frac{1}{6}
      \cK(s_{1} + s_{2} + s_{3})^{-1} \Big(
      \cH(s_{1} + s_{2}, s_{3}) \big( \symtf{2}(s_{1}, s_{2}) \otimes
      \symtf{1}(s_{3}) \big) \\
    & \quad{}+{}
      \cH(s_{1} + s_{3}, s_{2}) \big( \symtf{2}(s_{1}, s_{3}) \otimes
      \symtf{1}(s_{2}) \big) \\
    & \quad{}+{}
      \cH(s_{2} + s_{3}, s_{1}) \big( \symtf{2}(s_{2}, s_{3}) \otimes
      \symtf{1}(s_{1}) \big) \\
    & \quad{}+{}
      \cH(s_{1}, s_{2} + s_{3}) \big( \symtf{1}(s_{1}) \otimes
      \symtf{2}(s_{2}, s_{3}) \big)\\
    & \quad{}+{}
      \cH(s_{2}, s_{1} + s_{3}) \big( \symtf{1}(s_{2}) \otimes
      \symtf{2}(s_{1}, s_{3}) \big) \\
    & \quad{}+{}
      \cH(s_{3}, s_{1} + s_{2}) \big( \symtf{1}(s_{3}) \otimes
      \symtf{2}(s_{1}, s_{2}) \big)\\
    & \quad{}+{}
      \cN(s_{1} + s_{2}) \big( I_{m} \otimes \symtf{2}(s_{1}, s_{2}) \big)\\
    & \quad{}+{}
      \cN(s_{1} + s_{3}) \big( I_{m} \otimes \symtf{2}(s_{1}, s_{3}) \big)\\
    & \quad{}+{}
      \cN(s_{2} + s_{3}) \big( I_{m} \otimes \symtf{2}(s_{2}, s_{3}) \big)
      \Big).
  \end{align*}
\end{definition}

Similarly, we give a definition for the structured variant of the generalized
transfer functions inspired by the first-order
case~\cref{eqn:qgentf1,eqn:qgentf2,eqn:qgentf31,eqn:qgentf32},
and second-order case~\cref{eqn:sqtf1,eqn:sqgentf2,eqn:sqgentf31,eqn:sqgentf32}
in the following.

\begin{definition}\allowdisplaybreaks%
  \label{def:genTF}
  Given matrix-valued functions of the form
  $\cC\colon \C \to \C^{p \times m}$,
  $\cK\colon \C \to \C^{n \times n}$,
  $\cB\colon \C \to \C^{n \times m}$,
  $\cN\colon \C \to \C^{n \times nm}$,
  $\cH\colon \C \times \C \to \C^{n \times n^{2}}$,
  for which there exists an $s \in \C$ at which they can be evaluated and
  $\cK(s)$ is invertible.
  The first three levels of \emph{structured generalized transfer functions} are
  defined as
  \begin{align*}
    \genTF{1}{(B)}(s_{1}) & = \cC(s_{1}) \cK(s_{1})^{-1} \cB(s_{1}), \\
    \genTF{2}{(N, (B))}(s_{1}, s_{2}) & = \cC(s_{2})
      \cK(s_{2})^{-1} \cN(s_{1}) \big( I_{m} \otimes \cK(s_{1})^{-1}
      \cB(s_{1}) \big), \\
    \genTF{3}{(N, (N, (B)))}(s_{1}, s_{2}, s_{3}) & =
      \cC(s_{3}) \cK(s_{3})^{-1} \cN(s_{2})
      \Big( I_{m} \otimes \cK(s_{2})^{-1} \cN(s_{1})\\
    & \quad{}\times{}
      \big( I_{m} \otimes \cK(s_{1})^{-1} \cB(s_{1}) \big) \Big),\\
    \genTF{3}{(H, (B), (B))}(s_{1}, s_{2}, s_{3}) & =
      \cC(s_{3}) \cK(s_{3})^{-1} \cH(s_{2}, s_{1})
      \big( \cK(s_{2})^{-1} \cB(s_{2}) \otimes
      \cK(s_{1})^{-1} \cB(s_{1}) \big).
  \end{align*}
\end{definition}

In~\cite{BenG21}, a simplified variant of the generalized transfer
functions~\cref{eqn:qgentf1,eqn:qgentf2,eqn:qgentf31,eqn:qgentf32} has been
extended to systems with polynomial nonlinearities.
Based on the structured definitions above, these simplified generalized
transfer functions have then been extended to the structured case
in~\cite{GoyPB23}.

Note that in both \Cref{def:symTF,def:genTF}, the new matrix-valued function
$\cH\colon \C \times \C \to \C^{n \times n^{2}}$ results from the quadratic
terms in the time domain.

Both examples of internal system structures considered so far can be
represented in the new structured transfer function framework.
Unstructured first-order systems of the form~\cref{eqn:qsys} are given by
\begin{equation*}
  \begin{aligned}
    \cC(s) & = C, &
    \cK(s) & = s E - A, &
    \cB(s) & = B, &
    \cN(s) & = N, &
    \cH(s_{1}, s_{2}) & = H,
  \end{aligned}
\end{equation*}
where the bilinear terms are concatenated as in~\cref{eqn:qbilinconcat}.
For the second-order system of the form~\cref{eqn:sqsys}, the symmetric and
generalized transfer functions given in \Cref{subsec:mechtf} can be recovered
from \Cref{def:symTF,def:genTF} using
\begin{equation*}
  \begin{aligned}
    \cC(s) & = \Cp + s \Cv, \\
    \cK(s) & = s^{2} M + s D + K, \\
    \cB(s) & = \Bu, \\
    \cN(s) & = \Np + s \Nv, \\
    \cH(s_{1}, s_{2}) & = -(\Hpp + s_{2} \Hpv + s_{1} \Hvp + s_{1} s_{2} \Hvv),
  \end{aligned}
\end{equation*}
with the bilinear terms concatenated as in~\cref{eqn:sqbilinconcat}.
For the definition of higher level structured transfer functions for
quadratic-bilinear systems see~\cite[Sec.~6.3]{Wer21}.


\subsection{Another example structure: Quadratic-bilinear time-delay systems}%
\label{subsec:tdtf}

Before we consider interpolation of the structured transfer functions
in \Cref{def:symTF,def:genTF}, we present another example for internal system
structures that are covered by the new structured transfer function framework.
Quadratic-bilinear systems with constant time delays in the linear dynamic
components can be written as
\begin{equation} \label{eqn:tdqsys}
  \begin{aligned}
    E \dot{x}(t) & = \sum\limits_{k = 1}^{\ell} A_{k} x(t - \tau_{k})
      + H\big( x(t) \otimes x(t) \big)
      + \sum\limits_{j = 1}^{m} N_{j} x(t) u_{j}(t)
      + B u(t),\\
    y(t) & = C x(t),
  \end{aligned}
\end{equation}
with the matrices $A_{k} \in \R^{n \times n}$ describing the effect of state
delayed by $\tau_{k} \in \R_{\geq 0}$, for all $k = 1, \ldots, \ell$, and
the remaining system matrices as defined in~\cref{eqn:qsys}.
Following the variational analyses from~\cref{eqn:subqsys}, we observe that the
time-delay structure only affects the terms with the linear dynamics.
Therefore, the structured transfer functions for~\cref{eqn:tdqsys} are
given by using the matrix-valued functions
\begin{equation*}
  \begin{aligned}
    \cC(s) & = C, &
    \cK(s) & = s E - \sum\limits_{k = 1}^{\ell} A_{k} e^{-\tau_{k} s}, &
    \cB(s) & = B, &
    \cN(s) & = N, &
    \cH(s_{1}, s_{2}) & = H
  \end{aligned}
\end{equation*}
in \Cref{def:symTF,def:genTF}.
This is in accordance to the results for bilinear time-delay systems obtained
in~\cite{GosPBetal19,BenGW21a}.


\section{Structured transfer function interpolation}%
\label{sec:interp}

In this section, we present results on the construction of structured
interpolants for the symmetric or generalized transfer functions from the
previous section.


\subsection{Structure-preserving model reduction via projection}

For the construction of interpolating reduced-order models, we will use the
projection approach in this work.
Thereby, two constant basis matrices $V, W \in \C^{n \times r}$ are constructed,
which allow the computation of the reduced-order quantities via multiplication
with the original system matrices.
Given the full-order matrix-valued functions
$\cC\colon \C \to \C^{p \times n}$,
$\cK\colon \C \to \C^{n \times n}$,
$\cB\colon \C \to \C^{n \times m}$,
$\cN\colon \C \to \C^{n \times nm}$,
$\cH\colon \C \times \C \to \C^{n \times n^{2}}$, that describe a structured
quadratic-bilinear system in the frequency domain, reduced-order model
quantities are computed by
\begin{equation} \label{eqn:projection}
  \begin{aligned}
    \hcC(s) & = \cC V, &
    \hcK(s) & = W^{\herm} \cK(s) V, \\
    \hcB(s) & = W^{\herm} \cB(s), &
    \hcN(s) & = W^{\herm} \cN(s) (I_{m} \otimes V) \\
    \text{and} & &
    \hcH(s_{1}, s_{2}) & = W^{\herm} \cH(s_{1}, s_{2}) (V \otimes V),
  \end{aligned}
\end{equation}
where $W^{\herm} := \overline{W}^{\trans}$ denotes the conjugate transpose of
the matrix $W$.
The Kronecker product in the multiplication with the concatenation of the
bilinear terms in~\cref{eqn:projection} boils down to the multiplication of each
single bilinear term with the two basis matrices as
\begin{equation*}
  \hcN(s) = \begin{bmatrix} \hcN_{1}(s) & \ldots & \hcN_{m}(s) \end{bmatrix}
    = \begin{bmatrix} W^{\herm} \cN_{1}(s) V & \ldots &
    W^{\herm} \cN_{m}(s) V \end{bmatrix}.
\end{equation*}
Moreover, the Kronecker product of the basis matrix $V$ for the reduction of the
quadratic term in~\cref{eqn:projection} can be implemented efficiently without
explicitly forming $V \otimes V$, using techniques from tensor algebra; see,
e.g.,~\cite{BenB15, BenG17, Wer21}.

Model reduction by projection preserves internal structures by construction.
Any ma\-trix-valued function can be decomposed into frequency-affine form,
e.g., in the case of the term describing the linear dynamics, it can be
written as
\begin{equation} \label{eqn:freqaffine}
  \cK(s) = \sum\limits_{j = 1}^{n_{\cK}} h_{j}(s) \cK_{j},
\end{equation}
with $n_{\cK} \in \N$, some frequency-dependent scalar functions
$h_{j}\colon \C \to \C$ and constant matrices $\cK_{j} \in \C^{n \times n}$,
for all $j = 1, \ldots, n_{\cK}$.
The reduced-order matrix-valued function is then given by
\begin{equation} \label{eqn:freqaffineROM}
  \hcK(s) = W^{\herm} \cK(s) V = \sum\limits_{j = 1}^{n_{\cK}} h_{j}(s)
    W^{\herm} \cK_{j} V =
    \sum\limits_{j = 1}^{n_{\cK}} h_{j}(s) \hcK_{j},
\end{equation}
with the reduced-order constant matrices $\hcK_{j} \in \C^{r \times r}$,
for all $j = 1, \ldots, n_{\cK}$.
The frequency-dependent scalar functions in~\cref{eqn:freqaffineROM} are the
same as in~\cref{eqn:freqaffine}, i.e., the internal structure is preserved
and the reduced-order matrices replace their high-dimensional counterparts
from the original system to describe the reduced-order model.

To illustrate the computation of reduced-order quadratic-bilinear systems
via projection, we consider the two motivational differential
structures from the previous section.
In the case of second-order quadratic-bilinear systems~\cref{eqn:sqsys},
reduced-order systems are computed as
\begin{equation*}
  \begin{aligned}
    \hcC(s) & = \Cp V + s \Cv V, &
    \hcK(s) & = s^{2} W^{\herm} M V + W^{\herm} D V + W^{\herm} K V, \\
    \hcB(s) & = W^{\herm} \Bu, &
    \hcN(s) & = W^{\herm} \Np (I_{m} \otimes V) +
      s W^{\herm} \Nv (I_{m} \otimes V),
  \end{aligned}
\end{equation*}
with the reduced quadratic terms given by
\begin{equation*}
  \begin{aligned}
    \hcH(s_{1}, s_{2}) & = -(W^{\herm} \Hpp (V \otimes V) +
      s_{2} W^{\herm} \Hpv (V \otimes V)\\
    & \quad{}+{}
      s_{1} W^{\herm} \Hvp (V \otimes V)
      + s_{1} s_{2} W^{\herm} \Hvv (V \otimes V)).
  \end{aligned}
\end{equation*}
Evaluating the matrix products yields the reduced-order matrices that
represent the reduced-order system in the same structure as the original
system~\cref{eqn:sqsys}.
Similarly, for the quadratic-bilinear time-delay system~\cref{eqn:tdqsys},
reduced-order systems are computed via
\begin{equation*}
  \begin{aligned}
    \hcC(s) & = C V, &
    \hcK(s) & = s W^{\herm} E V - \sum\limits_{k = 1}^{\ell} W^{\herm} A_{k} V
      e^{-\tau_{k}s}, \\
    \hcB(s) & = W^{\herm} B, &
    \hcN(s) & = W^{\herm} N (I_{m} \otimes V) \\
    \text{and} &&
    \hcH(s_{1}, s_{2}) & = W^{\herm} H (V \otimes V).
  \end{aligned}
\end{equation*}
As in the second-order system case, evaluating the matrix products allows
to replace the original, high-dimensional system matrices in~\cref{eqn:tdqsys}
by the reduced ones to describe the reduced-order system using the same
structure.

The essential question of projection-based model order reduction is the
construction of the basis matrices $V$ and $W$.
In the following, conditions are derived to enforce interpolation of
the original symmetric or generalized transfer functions by the corresponding
transfer functions given via the reduced matrix-valued
functions~\cref{eqn:projection}.


\subsection{Interpolating symmetric transfer functions}

In this section, we consider the interpolation of the structured symmetric
subsystem transfer functions from \Cref{def:symTF}.
The following proposition states some first results for the general
interpolation of the first two symmetric subsystem transfer functions at
different frequency points.

\begin{proposition}[{\cite[Cor.~6.3]{Wer21}}]%
  \label{prp:symV}
  Let $G$ be a quadratic-bilinear system, described by its symmetric subsystem
  transfer functions $\symTF{k}$ from \Cref{def:symTF}, and $\hG$ the
  reduced-order quadratic-bilinear system constructed by~\cref{eqn:projection},
  with its reduced-order symmetric subsystem transfer functions~$\hsymTF{k}$.
  Also, let $\sigma_{1}, \sigma_{2} \in \C$ be interpolation points
  such that the matrix-valued functions $\cC, \cK, \cB, \cN, \cH$ and
  $\cK(.)^{-1}$ are defined in these points and their sum.
  Construct the basis matrix $V$ by
  \begin{equation*}
    \begin{aligned}
      V_{1, 1} & = \cK(\sigma_{1})^{-1} \cB(\sigma_{1}), \\
      V_{1, 2} & = \cK(\sigma_{2})^{-1} \cB(\sigma_{2}), \\
      V_{2} & = \cK(\sigma_{1} + \sigma_{2})^{-1} \big(
        \cH(\sigma_{1}, \sigma_{2}) (V_{1, 1} \otimes V_{1, 2}) +
        \cH(\sigma_{2}, \sigma_{1}) (V_{1, 2} \otimes V_{1, 1})\\
      & \quad{}+{}
        \cN(\sigma_{1}) (I_{m} \otimes V_{1, 1}) +
        \cN(\sigma_{2}) (I_{m} \otimes V_{1, 2}) \big), \\
      \mspan(V) & \supseteq \mspan\left(\begin{bmatrix} V_{1, 1} & V_{1, 2} &
        V_{2} \end{bmatrix} \right),
    \end{aligned}
  \end{equation*}
  and let $W$ be an arbitrary full-rank matrix of appropriate dimensions.
  Then, the symmetric subsystem transfer functions of $\hG$ interpolate those
  of $G$ in the following way:
  \begin{equation*}
    \begin{aligned}
      \symTF{1}(\sigma_{1}) & = \hsymTF{1} (\sigma_{1}),\\
      \symTF{1}(\sigma_{2}) & = \hsymTF{1} (\sigma_{2}),\\
      \symTF{2}(\sigma_{1}, \sigma_{2}) & = \hsymTF{1}(\sigma_{1}, \sigma_{2}).
    \end{aligned}
  \end{equation*}
\end{proposition}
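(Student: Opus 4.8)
The plan is to prove each interpolation claim by showing that the relevant full-order input-to-state transition vectors lie in the column span of $V$, and then to exploit the fact that projection with $W^{\herm} \cK V$ commutes appropriately with the recursive structure of the transfer functions in \Cref{def:symTF}.

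First I would establish the first-level interpolation $\symTF{1}(\sigma_{i}) = \hsymTF{1}(\sigma_{i})$ for $i = 1, 2$. This is the classical linear interpolation result: since $V_{1,i} = \cK(\sigma_{i})^{-1} \cB(\sigma_{i})$ is a column block of $V$, there is a matrix $R_{i}$ with $V R_{i} = V_{1,i}$, i.e. $\symtf{1}(\sigma_{i}) = V R_{i}$. The key algebraic step is the standard projection identity: from $\cK(\sigma_{i}) \symtf{1}(\sigma_{i}) = \cB(\sigma_{i})$ one multiplies by $W^{\herm}$ to get $W^{\herm} \cK(\sigma_{i}) V R_{i} = W^{\herm} \cB(\sigma_{i})$, which by~\cref{eqn:projection} reads $\hcK(\sigma_{i}) R_{i} = \hcB(\sigma_{i})$, so $R_{i} = \hcK(\sigma_{i})^{-1} \hcB(\sigma_{i}) = \hsymtf{1}(\sigma_{i})$. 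Applying $\cC(\sigma_{i})$ and using $\hcC(\sigma_{i}) = \cC(\sigma_{i}) V$ then gives $\symTF{1}(\sigma_{i}) = \cC(\sigma_{i}) V R_{i} = \hcC(\sigma_{i}) \hsymtf{1}(\sigma_{i}) = \hsymTF{1}(\sigma_{i})$. A crucial consequence to record is not just the output match but the stronger \emph{state-level} identity $\symtf{1}(\sigma_{i}) = V \hsymtf{1}(\sigma_{i})$, since this is what feeds the second-level argument.

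The second-level claim $\symTF{2}(\sigma_{1}, \sigma_{2}) = \hsymTF{2}(\sigma_{1}, \sigma_{2})$ is the heart of the proof and proceeds analogously but requires the state-level identities from level one as input. The idea is to show $\symtf{2}(\sigma_{1}, \sigma_{2}) = V \hsymtf{2}(\sigma_{1}, \sigma_{2})$, where both sides are defined by the recursive formula in \Cref{def:symTF} with their respective (full- vs.\ reduced-order) matrix functions. Substituting $\symtf{1}(\sigma_{i}) = V \hsymtf{1}(\sigma_{i})$ into the full-order formula for $\symtf{2}$, each quadratic term becomes $\cH(\sigma_{i}, \sigma_{j})(V \hsymtf{1}(\sigma_{i}) \otimes V \hsymtf{1}(\sigma_{j})) = \cH(\sigma_{i}, \sigma_{j})(V \otimes V)(\hsymtf{1}(\sigma_{i}) \otimes \hsymtf{1}(\sigma_{j}))$ by the mixed-product property of the Kronecker product, and each bilinear term becomes $\cN(\sigma_{i})(I_{m} \otimes V \hsymtf{1}(\sigma_{i})) = \cN(\sigma_{i})(I_{m} \otimes V)(I_{m} \otimes \hsymtf{1}(\sigma_{i}))$. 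The definition of $V_{2}$ is exactly $\cK(\sigma_{1}+\sigma_{2})^{-1}$ applied to the full-order version of this bracket (with $\symtf{1}$ in place of $V\hsymtf{1}$), so $\symtf{2}(\sigma_{1},\sigma_{2})$, up to the factor $\tfrac{1}{2}$, lies in $\mspan(V_{2}) \subseteq \mspan(V)$; hence $\symtf{2} = V T$ for some $T$. Multiplying the defining equation $\cK(\sigma_{1}+\sigma_{2}) \symtf{2} = \tfrac{1}{2}(\cdots)$ by $W^{\herm}$, recognizing the projected factors $\hcH = W^{\herm}\cH(V \otimes V)$ and $\hcN = W^{\herm}\cN(I_{m} \otimes V)$ from~\cref{eqn:projection}, and using the level-one state identities to match the arguments, yields $\hcK(\sigma_{1}+\sigma_{2}) T = $ (reduced bracket)$= \hcK(\sigma_{1}+\sigma_{2})\hsymtf{2}$, whence $T = \hsymtf{2}$ and finally $\symTF{2} = \cC(\sigma_{1}+\sigma_{2}) V T = \hsymTF{2}$.

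The main obstacle I anticipate is the bookkeeping required to verify that the projected quadratic and bilinear factors align exactly: one must confirm that $W^{\herm}\cH(\sigma_{i},\sigma_{j})(V \otimes V)$ equals the reduced quadratic function $\hcH(\sigma_{i},\sigma_{j})$ evaluated at the \emph{same} frequency arguments that appear in the reduced second-level recursion, and likewise that the Kronecker structure $(I_{m} \otimes V)(I_{m} \otimes \hsymtf{1}) = (I_{m} \otimes \symtf{1})$ threads through without a mismatch in block ordering. This is routine once the mixed-product property $(A \otimes B)(C \otimes D) = (AC) \otimes (BD)$ and the identity $\mspan(\symtf{2}) \subseteq \mspan(V_{2})$ are invoked, but it is the step where an error in matching frequency arguments or Kronecker factors would silently break the argument, so I would carry it out carefully rather than treat it as obvious. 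Note also that the statement only requires conditions on $V$ (with $W$ arbitrary full-rank), which is consistent with the proof: the interpolation of the symmetric transfer functions up to level two needs only that $V$ contains the full-order states, mirroring the one-sided Galerkin projection theory for linear and bilinear systems.
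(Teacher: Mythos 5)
Your proof is correct and takes essentially the same approach as the paper: the paper itself only cites \cite[Cor.~6.3]{Wer21} for this proposition, but its proof of the closely related \Cref{thm:symVW}(c) uses exactly your mechanism --- state-level identities expressing the full-order input-to-state transitions $\symtf{1}, \symtf{2}$ as $V$ times their reduced-order counterparts (which hold since $V_{1,1}, V_{1,2}, V_{2} \in \mspan(V)$), combined with the projection identities of \cref{eqn:projection} and the Kronecker mixed-product property. The only implicit assumption, shared with the paper, is the invertibility of the reduced matrices $\hcK(\sigma_{1})$, $\hcK(\sigma_{2})$, and $\hcK(\sigma_{1}+\sigma_{2})$.
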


Note that using~\cite[Thm.~6.2]{Wer21}, the basis matrix $V$ in \Cref{prp:symV}
can be extended such that the third symmetric subsystem transfer function
is interpolated as well.
In practice however, this result is barely used due to the exponential increase
of terms to evaluate for the construction of the projection space and the
corresponding computational complexity.
Therefore, it is omitted here.

The next proposition considers a similar interpolation result as in
\Cref{prp:symV} by setting conditions on the second basis matrix as well.

\begin{proposition}[{\cite[Lem.~6.4]{Wer21}}]%
  \label{prp:symVW}
  Given the same assumptions as in \Cref{prp:symV},
  let the matrices $V_{1, 1}$ and $V_{1, 2}$ be as in \Cref{prp:symV}.
  Construct the two basis matrices such that
  \begin{equation*}
    \begin{aligned}
      \mspan(V) & \supseteq \mspan\left( \begin{bmatrix} V_{1, 1} & V_{1, 2}
        \end{bmatrix} \right), \\
      \mspan(W) & \supseteq \mspan\left( \cK(\sigma_{1} + \sigma_{2})^{-\herm}
        \cC(\sigma_{1} + \sigma_{2})^{\herm} \right),
    \end{aligned}
  \end{equation*}
  and let $V$ and $W$ be of the same dimension.
  Then, the symmetric subsystem transfer functions of $\hG$ interpolate those
  of $G$ in the following way:
  \begin{equation*}
    \begin{aligned}
      \symTF{1}(\sigma_{1}) & = \hsymTF{1}(\sigma_{1}), \\
      \symTF{1}(\sigma_{2}) & = \hsymTF{1}(\sigma_{2}), \\
      \symTF{1}(\sigma_{1} + \sigma_{2}) & =
        \hsymTF{1}(\sigma_{1} + \sigma_{2}), \\
      \symTF{2}(\sigma_{1}, \sigma_{2}) & = 
        \hsymTF{2}(\sigma_{1}, \sigma_{2}).
    \end{aligned}
  \end{equation*}
\end{proposition}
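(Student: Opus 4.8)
The plan is to reduce everything to two elementary one-sided projection identities for the structured resolvent $\cK(\cdot)^{-1}$ and then to track how the first-level states propagate through the Kronecker structure of $\symtf{2}$. Throughout I would assume that the reduced pencils $\hcK(\sigma_{1})$, $\hcK(\sigma_{2})$, and $\hcK(\sigma_{1} + \sigma_{2})$ are invertible, so that all reduced transfer functions from \Cref{def:symTF} applied to the projected quantities~\cref{eqn:projection} are well defined (this is the implicit genericity hypothesis behind the statement, enforced by $V$ and $W$ having the same number of columns). The whole argument then rests on a \emph{primal} lemma controlled by $V$ and a \emph{dual} lemma controlled by $W$.

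First I would establish the primal lemma: if $\mspan\big(\cK(\sigma)^{-1}\cB(\sigma)\big) \subseteq \mspan(V)$, then the reduced first-level input-to-state transition reproduces the full one, i.e.\ $V\,\hcK(\sigma)^{-1}\hcB(\sigma) = \cK(\sigma)^{-1}\cB(\sigma)$. Writing $\cK(\sigma)^{-1}\cB(\sigma) = VR$ gives $\cB(\sigma) = \cK(\sigma) V R$, hence $\hcB(\sigma) = W^{\herm}\cB(\sigma) = \hcK(\sigma) R$ and therefore $\hcK(\sigma)^{-1}\hcB(\sigma) = R$, from which the claim follows. Applying this at $\sigma_{1}$ and $\sigma_{2}$, which is legitimate because $V_{1,1} = \symtf{1}(\sigma_{1})$ and $V_{1,2} = \symtf{1}(\sigma_{2})$ lie in $\mspan(V)$ by hypothesis, yields $\symTF{1}(\sigma_{i}) = \cC(\sigma_{i}) V\,\hcK(\sigma_{i})^{-1}\hcB(\sigma_{i}) = \hsymTF{1}(\sigma_{i})$ for $i = 1, 2$, and simultaneously records the crucial fact $V\,\hcK(\sigma_{i})^{-1}\hcB(\sigma_{i}) = \symtf{1}(\sigma_{i})$ that I will reuse for the second level.

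Next I would prove the dual lemma: if $\mspan\big(\cK(\mu)^{-\herm}\cC(\mu)^{\herm}\big) \subseteq \mspan(W)$, then the operator identity $\cC(\mu)\, V\,\hcK(\mu)^{-1} W^{\herm} = \cC(\mu)\,\cK(\mu)^{-1}$ holds. Writing $\cK(\mu)^{-\herm}\cC(\mu)^{\herm} = W L$ gives $\cC(\mu) = L^{\herm} W^{\herm}\cK(\mu)$, so that for any matrix $Z$ of compatible size both sides evaluate to $L^{\herm} W^{\herm} Z$ after cancelling $\hcK(\mu) = W^{\herm}\cK(\mu) V$ on the left and $\cK(\mu)$ on the right. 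Taking $\mu = \sigma_{1} + \sigma_{2}$ and $Z = \cB(\mu)$ immediately delivers the third first-level condition $\symTF{1}(\sigma_{1} + \sigma_{2}) = \hsymTF{1}(\sigma_{1} + \sigma_{2})$.

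The second-level identity then assembles from both lemmas. Using $\hcH(s_{1}, s_{2}) = W^{\herm}\cH(s_{1}, s_{2})(V \otimes V)$, $\hcN(s) = W^{\herm}\cN(s)(I_{m} \otimes V)$, the Kronecker mixed-product rule $(V \otimes V)(a \otimes b) = (Va) \otimes (Vb)$, and the key fact $V\,\hcK(\sigma_{i})^{-1}\hcB(\sigma_{i}) = \symtf{1}(\sigma_{i})$, each term of the inner bracket defining the reduced $\symtf{2}(\sigma_{1}, \sigma_{2})$ collapses to $W^{\herm}$ times the corresponding full-order term; for instance the quadratic contribution becomes $W^{\herm}\cH(\sigma_{1}, \sigma_{2})\big(\symtf{1}(\sigma_{1}) \otimes \symtf{1}(\sigma_{2})\big)$ and the bilinear one becomes $W^{\herm}\cN(\sigma_{1})\big(I_{m} \otimes \symtf{1}(\sigma_{1})\big)$. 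Hence the full reduced bracket equals $W^{\herm} Z$, where $Z$ is exactly the bracket appearing in the full-order $\symtf{2}(\sigma_{1}, \sigma_{2})$, so that $\hsymTF{2}(\sigma_{1}, \sigma_{2}) = \tfrac{1}{2}\,\cC(\mu) V\,\hcK(\mu)^{-1} W^{\herm} Z$ with $\mu = \sigma_{1} + \sigma_{2}$; applying the dual operator identity with this $Z$ turns it into $\tfrac{1}{2}\,\cC(\mu)\cK(\mu)^{-1} Z = \symTF{2}(\sigma_{1}, \sigma_{2})$. The main obstacle is not any single computation but recognizing and bookkeeping this division of labor: the $V$-condition must make the first-level states \emph{exact} in the lifted coordinates so that the Kronecker and $I_{m} \otimes(\cdot)$ factors pass through cleanly, while the single dual condition at the summed frequency $\sigma_{1} + \sigma_{2}$ must cancel both the outer resolvent $\cK(\mu)^{-1}$ and the output map $\cC(\mu)$ at once — this is precisely why, in contrast to \Cref{prp:symV}, the second-level direction $V_{2}$ need not be included in $\mspan(V)$.
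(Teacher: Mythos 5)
Your proof is correct and takes essentially the same route as the paper: the paper defers this proposition to \cite[Lem.~6.4]{Wer21}, but its own proof of the closely related \Cref{thm:symVW} rests on exactly your two ingredients, namely the exactness $V\hcK(\sigma_{i})^{-1}\hcB(\sigma_{i}) = \cK(\sigma_{i})^{-1}\cB(\sigma_{i})$ enforced through $\mspan(V)$, and the dual identity $\hcC(\mu)\hcK(\mu)^{-1}W^{\herm} = \cC(\mu)\cK(\mu)^{-1}$ at $\mu = \sigma_{1}+\sigma_{2}$ enforced through $\mspan(W)$. Your factorization arguments ($\cK(\sigma)^{-1}\cB(\sigma) = VR$ and $\cK(\mu)^{-\herm}\cC(\mu)^{\herm} = WL$) are simply the paper's projectors $P_{\mathrm{V}}$ and $P_{\mathrm{W}}$ written out explicitly, so nothing differs in substance.
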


The result of \Cref{prp:symVW} allows to enforce the same and more interpolation
conditions than in \Cref{prp:symV} in an implicit way using the second basis
matrix.
This reduces the computational complexity of the construction of the
basis matrices, since no nonlinear terms are involved, and allows to match
more interpolation conditions with smaller reduced-order models.

The choice of interpolation points for the different subsystem levels is crucial
for the quality of the computed reduced-order model.
Good or even optimal choices of interpolation points are currently unknown.
However, an advantageous choice to minimize the amount of basis contributions
necessary for the interpolation of higher level subsystem transfer functions
in the symmetric case is
$\sigma_{1} = \sigma_{2} = \sigma_{3} = \sigma$.
The following theorem states the interpolation conditions for this particular
selection of interpolation points and also gives conditions for the
interpolation of the third subsystem transfer function.

\begin{theorem}%
  \label{thm:symVW}
  Let $G$ be a quadratic-bilinear system, described by its symmetric subsystem
  transfer functions $\symTF{k}$ as in \Cref{def:symTF}, and $\hG$ the
  reduced-order quadratic-bilinear system constructed by~\cref{eqn:projection},
  with its reduced-order symmetric subsystem transfer functions~$\hsymTF{k}$.
  Also, let $\sigma \in \C$ be an interpolation point such that the
  matrix-valued functions $\cC, \cK, \cB, \cN, \cH$ and
  $\cK(.)^{-1}$ are defined at $\sigma$ as well as at $2\sigma$ and $3\sigma$.
  Construct the matrices
  \begin{equation*}
    \begin{aligned}
      V_{1} & = \cK(\sigma)^{-1} \cB(\sigma), \\
      V_{2} & = \cK(2 \sigma)^{-1} \big(\cH(\sigma, \sigma)
        (V_{1} \otimes V_{1})  + \cN(\sigma) (I_{m} \otimes V_{1})\big),\\
      V_{3} & = \cK(3 \sigma)^{-1} \big(\cH(2\sigma, \sigma)
        (V_{2} \otimes V_{1}) + \cH(\sigma, 2\sigma)
        (V_{1} \otimes V_{2}) + \cN(2\sigma) (I_{m} \otimes V_{2})\big),
    \end{aligned}
  \end{equation*}
  and
  \begin{equation*}
    \begin{aligned}
      W_{1} & = \cK(2 \sigma)^{-\herm} \cC(2 \sigma)^{\herm},\\
      W_{2} & = \cK(3 \sigma)^{-\herm} \cC(3 \sigma)^{\herm}.
    \end{aligned}
  \end{equation*}
  Then, the following statements hold true:
  \begin{enumerate}[label = (\alph*)]
    \item If the basis matrix $V$ is such that
      \begin{equation*}
        \begin{aligned}
          \mspan(V) \supseteq \mspan \left( \begin{bmatrix} V_{1} & V_{2} &
            V_{3} \end{bmatrix} \right),
        \end{aligned}
      \end{equation*}
      and $W$ is full-rank and of the same  dimension as $V$, then the symmetric
      transfer functions of $\hG$ interpolate those of $G$ in the following
      way:
      \begin{equation*}
        \begin{aligned}
          \symTF{1}(\sigma) & = \hsymTF{1}(\sigma), \\
          \symTF{2}(\sigma, \sigma) & = \hsymTF{2}(\sigma, \sigma), \\
          \symTF{3}(\sigma, \sigma, \sigma) &
            = \hsymTF{3}(\sigma, \sigma, \sigma).
        \end{aligned}
      \end{equation*}
    \item If the basis matrices $V$ and $W$ are such that
      \begin{equation*}
        \begin{aligned}
          \mspan(V) & \supseteq \mspan \left( V_{1} \right) &
          \text{and} &&
          \mspan(W) & \supseteq \mspan \left( W_{1} \right),
        \end{aligned}
      \end{equation*}
      and have the same dimension, then the symmetric
      transfer functions of $\hG$ interpolate those of $G$ in the following
      way:
      \begin{equation*}
        \begin{aligned}
          \symTF{1}(\sigma) & = \hsymTF{1}(\sigma), \\
          \symTF{1}(2 \sigma) & = \hsymTF{1}(2 \sigma), \\
          \symTF{2}(\sigma, \sigma) & = \hsymTF{2}(\sigma, \sigma).
        \end{aligned}
      \end{equation*}
    \item If the basis matrices $V$ and $W$ are such that
      \begin{equation*}
        \begin{aligned}
          \mspan(V) & \supseteq \mspan \left( \begin{bmatrix} V_{1} & V_{2} \end{bmatrix}\right) &
          \text{and} &&
          \mspan(W) & \supseteq \mspan \left( W_{2} \right),
        \end{aligned}
      \end{equation*}
      and both of appropriate dimensions, then the symmetric
      transfer functions of $\hG$ interpolate those of $G$ in the following
      way:
      \begin{equation*}
        \begin{aligned}
          \symTF{1}(\sigma) & = \hsymTF{1}(\sigma), \\
          \symTF{1}(3 \sigma) & = \hsymTF{1}(3 \sigma), \\
          \symTF{2}(\sigma, \sigma) & = \hsymTF{2}(\sigma, \sigma), \\
          \symTF{3}(\sigma, \sigma, \sigma) &
            = \hsymTF{3}(\sigma, \sigma, \sigma).
        \end{aligned}
      \end{equation*}
  \end{enumerate}
\end{theorem}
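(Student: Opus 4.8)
My plan is to reduce the whole statement to two elementary ``interpolation-by-projection'' identities and then apply them recursively over the levels $k = 1, 2, 3$. Writing $\widehat{g}_{\mathrm{sym},k}$ for the reduced input-to-state transitions assembled from the projected functions in~\cref{eqn:projection}, the first tool (right projection) is: whenever $\cK(\mu)^{-1} b \in \mspan(V)$ for a right-hand side $b$, then $V \hcK(\mu)^{-1} W^{\herm} b = \cK(\mu)^{-1} b$; I would prove this by writing $\cK(\mu)^{-1} b = V \widehat{z}$ and checking $\hcK(\mu) \widehat{z} = W^{\herm} \cK(\mu) V \widehat{z} = W^{\herm} b$. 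The dual tool (left projection) is: whenever $\cK(\mu)^{-\herm} \cC(\mu)^{\herm} \in \mspan(W)$, then $\hcC(\mu) \hcK(\mu)^{-1} W^{\herm} = \cC(\mu) \cK(\mu)^{-1}$, which follows by the same argument applied to the conjugate transpose. Throughout I assume, as is standard for such projection results, that $\hcK(\cdot)$ is invertible at $\sigma$, $2\sigma$, $3\sigma$ so that the reduced transfer functions exist. With this in place, part~(b) requires no separate work: setting $\sigma_{1} = \sigma_{2} = \sigma$ gives $V_{1,1} = V_{1,2} = V_{1}$ and $\cK(\sigma_{1}+\sigma_{2})^{-\herm}\cC(\sigma_{1}+\sigma_{2})^{\herm} = W_{1}$, so the hypotheses of \Cref{prp:symVW} hold verbatim and its conclusion specializes exactly to the three claimed equalities.

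For part~(a) I would first record that evaluating \Cref{def:symTF} at coincident arguments collapses the symmetric sums: the two $\cH$- and two $\cN$-terms of $\symtf{2}$ coincide (the factor $\tfrac{1}{2}$ cancelling the doubling), so that $\symtf{2}(\sigma,\sigma) = \cK(2\sigma)^{-1}\big(\cH(\sigma,\sigma)(V_{1} \otimes V_{1}) + \cN(\sigma)(I_{m} \otimes V_{1})\big) = V_{2}$, while the six $\cH$- and three $\cN$-terms of $\symtf{3}$ collapse onto three distinct products, giving $\symtf{3}(\sigma,\sigma,\sigma) = \tfrac{1}{2} V_{3}$. The argument then runs by induction on the level. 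At level~$1$, $V_{1} = \symtf{1}(\sigma) \in \mspan(V)$ so the right identity yields $V \widehat{g}_{\mathrm{sym},1}(\sigma) = \symtf{1}(\sigma)$ and hence $\symTF{1}(\sigma) = \hsymTF{1}(\sigma)$ because $\hcC(\sigma) = \cC(\sigma) V$. At level~$2$, using the mixed-product rule $(A \otimes B)(C \otimes D) = (AC) \otimes (BD)$ together with $\hcH = W^{\herm}\cH(V \otimes V)$, $\hcN = W^{\herm}\cN(I_{m} \otimes V)$ and the level-$1$ relation, I would show the reduced right-hand side of $\widehat{g}_{\mathrm{sym},2}(\sigma,\sigma)$ is exactly $W^{\herm}$ times the full one; since $\symtf{2}(\sigma,\sigma) = V_{2} \in \mspan(V)$, the right identity gives $V \widehat{g}_{\mathrm{sym},2}(\sigma,\sigma) = \symtf{2}(\sigma,\sigma)$ and $\symTF{2}(\sigma,\sigma) = \hsymTF{2}(\sigma,\sigma)$. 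Level~$3$ repeats this one more time: the level-$1$ and level-$2$ relations make the reduced right-hand side of $\widehat{g}_{\mathrm{sym},3}$ the $W^{\herm}$-projection of the full one, and $\symtf{3}(\sigma,\sigma,\sigma) = \tfrac{1}{2} V_{3} \in \mspan(V)$ closes the induction.

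For part~(c) the matrix $V$ now contains only $[V_{1}\ V_{2}]$, so the third level can no longer be matched on the $V$ side, and the idea is to split the work between $V$ and $W$. Exactly as in part~(a), $V_{1}, V_{2} \in \mspan(V)$ give $V \widehat{g}_{\mathrm{sym},1}(\sigma) = \symtf{1}(\sigma)$ and $V \widehat{g}_{\mathrm{sym},2}(\sigma,\sigma) = \symtf{2}(\sigma,\sigma)$, hence $\symTF{1}(\sigma) = \hsymTF{1}(\sigma)$ and $\symTF{2}(\sigma,\sigma) = \hsymTF{2}(\sigma,\sigma)$. Because $W_{2} = \cK(3\sigma)^{-\herm}\cC(3\sigma)^{\herm} \in \mspan(W)$, the left identity at $\mu = 3\sigma$ yields $\hcC(3\sigma)\hcK(3\sigma)^{-1}W^{\herm} = \cC(3\sigma)\cK(3\sigma)^{-1}$, from which $\symTF{1}(3\sigma) = \hsymTF{1}(3\sigma)$ follows by inserting $\hcB(3\sigma) = W^{\herm}\cB(3\sigma)$. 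For the third level, the two lower-level $V$-relations force the reduced right-hand side of $\widehat{g}_{\mathrm{sym},3}(\sigma,\sigma,\sigma)$ to equal $W^{\herm} r$, where $r$ is the full right-hand side in $\symtf{3}(\sigma,\sigma,\sigma) = \tfrac{1}{2}\cK(3\sigma)^{-1} r$; applying the left identity to the outer map $\cC(3\sigma)\cK(3\sigma)^{-1}$ then gives $\hsymTF{3}(\sigma,\sigma,\sigma) = \tfrac{1}{2}\cC(3\sigma)\cK(3\sigma)^{-1} r = \symTF{3}(\sigma,\sigma,\sigma)$.

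The routine part will be the bookkeeping of the collapsing symmetric sums and the repeated use of the mixed-product rule to certify that each reduced right-hand side is precisely the $W^{\herm}$-projection of its full counterpart. The genuine crux is the third level of part~(c): I would need to see that the explicit direction $V_{3}$ used in part~(a) can be traded for the single left direction $W_{2}$, which succeeds only because the level-$1$ and level-$2$ $V$-interpolations are already in force (so the ``input'' to $\widehat{g}_{\mathrm{sym},3}$ reduces correctly through $\hcH$ and $\hcN$) while the outer observation map $\cC(3\sigma)\cK(3\sigma)^{-1}$ is captured by $W$. Verifying this recursive interplay between the $V$- and $W$-conditions, and confirming that the reduced model collapses the coincident arguments in the same way as the full one, is where the care is required.
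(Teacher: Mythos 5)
Your proposal is correct and follows essentially the same route as the paper: part~(b) by specializing \Cref{prp:symVW}, part~(a) as the coincident-point extension of \Cref{prp:symV} (which the paper delegates to the cited dissertation but you work out directly), and part~(c) by exactly the paper's argument --- collapsing the coincident arguments so that $\symtf{2}(\sigma,\sigma)=V_{2}$ and $\symtf{3}(\sigma,\sigma,\sigma)=\tfrac{1}{2}V_{3}$, pushing the lower-level relations $V\widehat{g}_{\mathrm{sym},1}=\symtf{1}$, $V\widehat{g}_{\mathrm{sym},2}=\symtf{2}$ through $\hcH$ and $\hcN$ via the mixed-product rule, and closing with the left-projection identity $\hcC(3\sigma)\hcK(3\sigma)^{-1}W^{\herm}=\cC(3\sigma)\cK(3\sigma)^{-1}$ furnished by $W_{2}$. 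Your two ``elementary identities'' are just the coordinate-free form of the oblique projectors $P_{\mathrm{V}}$, $P_{\mathrm{W}}$ the paper uses, so there is no substantive difference in method.
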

\begin{proof}
  Part~(a) is an extension of \Cref{prp:symV} to the third symmetric subsystem
  transfer function with the special selection of interpolation points and
  follows immediately from~\cite[Thm.~6.2]{Wer21}.
  Part~(b) follows directly from \Cref{prp:symVW} such that only Part~(c) is
  left to be proven.
  The first three interpolation conditions follow from previous results,
  therefore we concentrate on the third symmetric subsystem transfer function.
  Inserting the selection of interpolation points into  \Cref{def:symTF}
  yields
  \begin{equation*}
    \begin{aligned}
      \hsymTF{3}(\sigma, \sigma, \sigma) & =
        \frac{1}{2} \hcC(3 \sigma) \hcK(3 \sigma)^{-1}
        \big( \hcH(2\sigma, \sigma) (\hV_{2} \otimes \hV_{1})\\
      & \quad{}+{} 
        \hcH(\sigma, 2\sigma) (\hV_{1} \otimes \hV_{2}) +
        \hcN(2\sigma) (I_{m} \otimes \hV_{2})\big),
    \end{aligned}
  \end{equation*}
  where
  \begin{equation*}
    \begin{aligned}
      \hV_{1} & = \hcK(\sigma)^{-1} \hcB(\sigma) \quad\text{and}\\
      \hV_{2} & = \hcK(2\sigma)^{-1} \big( \hcH(\sigma, \sigma)
        (\hV_{1} \otimes \hV_{1}) + \hcN(\sigma) (I_{m} \otimes \hV_{1}) \big).
    \end{aligned}
  \end{equation*}
  Using the projector $P_{\mathrm{V}} = V (W^{\herm} \cK(\sigma) V)^{-1}
  W^{\herm} \cK(\sigma)$ onto the space spanned by the columns of $V$, it
  follows that
  \begin{equation*}
    \begin{aligned}
      V \hV_{1} & = V_{1} & \text{and} &&
      V \hV_{2} & = V_{2}
    \end{aligned}
  \end{equation*}
  hold due to the construction of the space spanned by the columns of $V$ via
  the columns of $V_{1}$ and $V_{2}$.
  Using the projector $P_{\mathrm{W}} = W (W^{\herm} \cK(\sigma) V)^{-\herm}
  V^{\herm} \cK(\sigma)^{\herm}$ onto the space spanned by the columns of
  $W$, it also holds that
  \begin{equation*}
    \hcC(\sigma) \hcK(\sigma)^{-1} W^{\herm} = \cC(\sigma) \cK(\sigma)^{-1},
  \end{equation*}
  such that the final result of the theorem holds via
  \begin{equation*}
    \begin{aligned}
      \hsymTF{3}(\sigma, \sigma, \sigma) & =
        \frac{1}{2} \hcC(3 \sigma) \hcK(3 \sigma)^{-1} W^{\herm} 
        \big( \cH(2\sigma, \sigma) (V \hV_{2} \otimes V \hV_{1})\\
      & \quad{}+{} 
        \cH(\sigma, 2\sigma) (V\hV_{1} \otimes V\hV_{2}) +
        \cN(2\sigma) (I_{m} \otimes V\hV_{2})\big)\\
      & = \frac{1}{2} \cC(3 \sigma) \cK(3 \sigma)^{-1}
        \big( \cH(2\sigma, \sigma) (V_{2} \otimes V_{1})\\
      & \quad{}+{} 
        \cH(\sigma, 2\sigma) (V_{1} \otimes V_{2}) +
        \cN(2\sigma) (I_{m} \otimes V_{2})\big) \\
      & = \symTF{3}(\sigma, \sigma, \sigma).
    \end{aligned}
  \end{equation*}
\end{proof}


\subsection{Interpolating generalized transfer functions}

In this section, we investigate conditions on the projection spaces
for the interpolation of the structured generalized transfer functions from
\Cref{def:genTF}.
The following proposition can be seen as an analog to \Cref{prp:symV} for the
generalized case.

\begin{proposition}[{\cite[Thm.~6.13]{Wer21}}]%
  \label{prp:genV}
  Let $G$ be a quadratic-bilinear system, described by its generalized
  transfer functions $\genTF{k}{(.)}$ from \Cref{def:genTF}, and $\hG$ the
  reduced-order quadratic-bilinear system constructed by~\cref{eqn:projection},
  with its reduced-order generalized transfer functions $\hgenTF{k}{(.)}$.
  Also, let $\sigma_{1}, \sigma_{2}, \sigma_{3} \in \C$ be interpolation points
  such that the matrix-valued functions $\cC, \cK, \cB, \cN, \cH$ and
  $\cK(.)^{-1}$ are defined at these points.
  Compute
  \begin{equation*}
    \begin{aligned}
      V_{1, 1} & = \cK(\sigma_{1})^{-1} \cB(\sigma_{1}), \\
      V_{1, 2} & = \cK(\sigma_{2})^{-1} \cB(\sigma_{2}), \\
      V_{2} & = \cK(\sigma_{2})^{-1} \cN(\sigma_{1})
        (I_{m} \otimes V_{1, 1}),\\
      V_{3, 1} & = \cK(\sigma_{3})^{-1} \cN(\sigma_{2}) (I_{m} \otimes V_{2}),\\
      V_{3, 2} & = \cK(\sigma_{3})^{-1} \cH(\sigma_{2}, \sigma_{1})
        (V_{1, 2} \otimes V_{1, 1}),
    \end{aligned}
  \end{equation*}
  and construct the basis matrix $V$ such that
    \begin{equation*}
    \begin{aligned}
      \mspan(V) & \supseteq \mspan \left(
        \begin{bmatrix} V_{1, 1} & V_{1, 2} & V_{2} &
        V_{3, 1} & V_{3, 2} \end{bmatrix} \right).
    \end{aligned}
  \end{equation*}
  Let $W$ be an arbitrary full-rank matrix of appropriate dimensions.
  Then, the generalized transfer functions of $\hG$ interpolate those of $G$
  in the following way:
  \begin{equation*}
    \begin{aligned}
      \genTF{1}{(B)}(\sigma_{1}) & = \hgenTF{1}{(B)}(\sigma_{1}), \\
      \genTF{1}{(B)}(\sigma_{2}) & = \hgenTF{1}{(B)}(\sigma_{2}), \\
      \genTF{2}{(N, (B))}(\sigma_{1}, \sigma_{2}) & =
        \hgenTF{2}{(N, (B))}(\sigma_{1}, \sigma_{2}), \\
      \genTF{3}{(N, (N, (B)))}(\sigma_{1}, \sigma_{2}, \sigma_{3}) & =
        \hgenTF{3}{(N, (N, (B)))}(\sigma_{1}, \sigma_{2}, \sigma_{3}), \\
      \genTF{3}{(H, (B), (B))}(\sigma_{1}, \sigma_{2}, \sigma_{3}) & =
        \hgenTF{3}{(H, (B), (B))}(\sigma_{1}, \sigma_{2}, \sigma_{3}).
    \end{aligned}
  \end{equation*}
\end{proposition}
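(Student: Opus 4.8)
The plan is to run the standard projection-based interpolation argument, using at each frequency $s$ the oblique projector
\[
  P_{\mathrm{V}}(s) = V \big( W^{\herm} \cK(s) V \big)^{-1} W^{\herm} \cK(s)
\]
onto $\mspan(V)$. Throughout I tacitly assume the interpolation points are chosen so that each reduced matrix $W^{\herm} \cK(\sigma_{i}) V$ is invertible, which holds generically since $W$ is full rank of appropriate dimension. The only structural fact needed is that $P_{\mathrm{V}}(s)$ acts as the identity on $\mspan(V)$, i.e.\ $P_{\mathrm{V}}(s) v = v$ for every $v \in \mspan(V)$ and every admissible $s$, simply because $P_{\mathrm{V}}(s)$ is a projector onto $\mspan(V)$.

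First I would establish a \emph{lifting identity} relating the reduced input-to-state quantities to the full-order building blocks. Writing $\hV_{1,1} = \hcK(\sigma_{1})^{-1} \hcB(\sigma_{1})$ and using $\hcK(\sigma_{1}) = W^{\herm} \cK(\sigma_{1}) V$, $\hcB(\sigma_{1}) = W^{\herm} \cB(\sigma_{1})$ from~\cref{eqn:projection} together with $\cB(\sigma_{1}) = \cK(\sigma_{1}) V_{1,1}$, one gets
\[
  V \hV_{1,1} = V \big( W^{\herm} \cK(\sigma_{1}) V \big)^{-1} W^{\herm} \cK(\sigma_{1}) V_{1,1} = P_{\mathrm{V}}(\sigma_{1}) V_{1,1} = V_{1,1},
\]
the last step because $V_{1,1} \in \mspan(V)$ by construction; the same computation gives $V \hV_{1,2} = V_{1,2}$.

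Next I would propagate this identity up the recursion. The key algebraic facts are the Kronecker mixed-product identities $(I_{m} \otimes V)(I_{m} \otimes \hV_{1,1}) = I_{m} \otimes (V \hV_{1,1})$ and $(V \otimes V)(\hV_{1,2} \otimes \hV_{1,1}) = (V \hV_{1,2}) \otimes (V \hV_{1,1})$, which let me pull the basis matrix through the reduced operators $\hcN$ and $\hcH$ from~\cref{eqn:projection}. For the bilinear chain, setting $\hV_{2} = \hcK(\sigma_{2})^{-1} \hcN(\sigma_{1})(I_{m} \otimes \hV_{1,1})$ and using $\cN(\sigma_{1})(I_{m} \otimes V_{1,1}) = \cK(\sigma_{2}) V_{2}$, I obtain $V \hV_{2} = P_{\mathrm{V}}(\sigma_{2}) V_{2} = V_{2}$; one further step of the same type yields $V \hV_{3,1} = V_{3,1}$ at $\sigma_{3}$. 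For the quadratic branch, with $\hV_{3,2} = \hcK(\sigma_{3})^{-1} \hcH(\sigma_{2},\sigma_{1})(\hV_{1,2} \otimes \hV_{1,1})$ and $\cH(\sigma_{2},\sigma_{1})(V_{1,2} \otimes V_{1,1}) = \cK(\sigma_{3}) V_{3,2}$, the same mechanism gives $V \hV_{3,2} = V_{3,2}$. Each step relies only on the previously established lift of the inner quantity and on the corresponding vector $V_{1,1}, V_{1,2}, V_{2}, V_{3,1}$ or $V_{3,2}$ lying in $\mspan(V)$.

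Finally, since every generalized transfer function in \Cref{def:genTF} is $\cC(\cdot)$ applied to one of these input-to-state quantities and $\hcC(s) = \cC(s) V$, each reduced transfer function collapses to $\cC(\cdot)$ applied to the lifted reduced quantity, e.g.
\[
  \hgenTF{2}{(N, (B))}(\sigma_{1},\sigma_{2}) = \cC(\sigma_{2}) V \hV_{2} = \cC(\sigma_{2}) V_{2} = \genTF{2}{(N, (B))}(\sigma_{1},\sigma_{2}),
\]
and likewise for the remaining four identities. I expect no deep obstacle: the work is entirely bookkeeping, and the one point demanding care is the recursive propagation through the Kronecker structure, where one must track which frequency argument governs each projector and verify that the mixed-product identities apply at every level so that the induction closes cleanly. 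The invertibility of $W^{\herm}\cK(\sigma_{i}) V$ is the single genericity hypothesis underpinning the whole argument.
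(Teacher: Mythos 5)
Your proposal is correct and uses exactly the argument underlying this result: the paper itself defers the proof of \Cref{prp:genV} to \cite[Thm.~6.13]{Wer21}, but the technique there — and in the paper's own proof of \Cref{thm:symVW}, part (c) — is precisely your frequency-dependent oblique projector $V\big(W^{\herm}\cK(s)V\big)^{-1}W^{\herm}\cK(s)$ together with the lifting identities $V\hV_{1,1}=V_{1,1}$, $V\hV_{2}=V_{2}$, etc., propagated through the Kronecker structure. Your handling of the per-level frequency arguments and the tacit invertibility of $W^{\herm}\cK(\sigma_{i})V$ matches the paper's (equally tacit) assumptions, so there is no gap.
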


\begin{remark}
  As for the symmetric subsystem transfer functions, it is possible to reduce
  the dimensions of the constructed projection space by choosing suitable
  interpolation points and, also as in the symmetric subsystem transfer function
  case, this can be achieved by choosing $\sigma_{1} = \sigma_{2} = \sigma_{3}$.
  However, only marginal savings in terms of basis contributions and
  computational costs can be achieved by this since in
  \Cref{prp:genV}, only the matrix $V_{1, 2}$ can be omitted.
\end{remark}

Note that in \Cref{prp:genV}, the basis contribution from $V_{3, 1}$ results
in the interpolation of the third generalized transfer function with two
bilinear terms $\genTF{3}{(N, (N, (B)))}$.
It may happen that no interpolation conditions are imposed for this transfer
function such that the subspace dimensions can be reduced by omitting
$V_{3, 1}$.
As in the case of symmetric subsystem transfer functions, the second basis
matrix $W$ can be used to reduce the minimal dimension of the constructed
subspaces and to simplify the construction of the subspaces.
These results are given in the following corollary.

\begin{corollary}%
  \label{cor:genVW}
  Let $G$ be a quadratic-bilinear system, described by its generalized
  transfer functions $\genTF{k}{(.)}$ from \Cref{def:genTF}, and $\hG$ the
  reduced-order quadratic-bilinear system constructed by~\cref{eqn:projection},
  with its reduced-order generalized transfer functions $\hgenTF{k}{(.)}$.
  Also, let $\sigma_{1}, \sigma_{2} \in \C$ be interpolation points
  such that the matrix-valued functions $\cC, \cK, \cB, \cN, \cH$ and
  $\cK(.)^{-1}$ are defined at these points.
  Let the basis matrices $V$ and $W$ be constructed by
  \begin{equation*}
    \begin{aligned}
      \mspan(V) & \supseteq
        \mspan \left( \cK(\sigma_{1})^{-1} \cB(\sigma_{1}) \right), \\
      \mspan(W) & \supseteq
        \mspan\left(\cK(\sigma_{2})^{-\herm} \cC(\sigma_{2})^{\herm}\right),
    \end{aligned}
  \end{equation*}
  and are of the same dimension.
  Then, the generalized transfer functions of $\hG$ interpolate those of $G$
  in the following way:
  \begin{equation*}
    \begin{aligned}
      \genTF{1}{(B)}(\sigma_{1}) & = \hgenTF{1}{(B)}(\sigma_{1}),\\
      \genTF{1}{(B)}(\sigma_{2}) & = \hgenTF{1}{(B)}(\sigma_{2}),\\
      \genTF{2}{(N, (B))}(\sigma_{1}, \sigma_{2}) & =
        \hgenTF{2}{(N, (B))}(\sigma_{1}, \sigma_{2}), \\
      \genTF{3}{(H, (B), (B))}(\sigma_{1}, \sigma_{1}, \sigma_{2}) & =
        \hgenTF{3}{(H, (B), (B))}(\sigma_{1}, \sigma_{1}, \sigma_{2}).
    \end{aligned}
  \end{equation*}
\end{corollary}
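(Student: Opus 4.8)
The plan is to reduce all four interpolation claims to the two elementary projection identities already isolated in the proof of \Cref{thm:symVW}. Introducing the oblique projectors $P_{\mathrm{V}} = V(W^{\herm}\cK(s)V)^{-1}W^{\herm}\cK(s)$ onto the columns of $V$ and $P_{\mathrm{W}} = W(W^{\herm}\cK(s)V)^{-\herm}V^{\herm}\cK(s)^{\herm}$ onto the columns of $W$, the construction of the two bases yields, at $s = \sigma_{1}$, the primal identity
\begin{equation*}
  V\hcK(\sigma_{1})^{-1}\hcB(\sigma_{1}) = \cK(\sigma_{1})^{-1}\cB(\sigma_{1}),
\end{equation*}
because $\cK(\sigma_{1})^{-1}\cB(\sigma_{1}) \in \mspan(V)$, and, at $s = \sigma_{2}$, the dual identity
\begin{equation*}
  \hcC(\sigma_{2})\hcK(\sigma_{2})^{-1}W^{\herm} = \cC(\sigma_{2})\cK(\sigma_{2})^{-1},
\end{equation*}
because $\cK(\sigma_{2})^{-\herm}\cC(\sigma_{2})^{\herm} \in \mspan(W)$. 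I would either re-establish these by the argument in \Cref{thm:symVW} or simply invoke them.

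With these two identities in hand, each generalized transfer function follows by substitution into \Cref{def:genTF}. The first two conditions are immediate: $\hgenTF{1}{(B)}(\sigma_{1})$ matches by the primal identity (using $\hcC(\sigma_{1}) = \cC(\sigma_{1})V$), while $\hgenTF{1}{(B)}(\sigma_{2})$ matches by the dual identity. For the second-level function the only extra ingredient is the mixed-product rule $(I_{m}\otimes V)(I_{m}\otimes X) = I_{m}\otimes(VX)$ applied to $\hcN(\sigma_{1}) = W^{\herm}\cN(\sigma_{1})(I_{m}\otimes V)$: this pulls the reduced input-to-state quantity through the Kronecker factor so the primal identity collapses it to $\cK(\sigma_{1})^{-1}\cB(\sigma_{1})$, after which the dual identity strips the outer $W^{\herm}$. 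The quadratic third-level function works identically, now using $(V\otimes V)(X\otimes X) = (VX)\otimes(VX)$ on $\hcH(\sigma_{1},\sigma_{1}) = W^{\herm}\cH(\sigma_{1},\sigma_{1})(V\otimes V)$; the repeated argument $\sigma_{1}$ in $\genTF{3}{(H,(B),(B))}(\sigma_{1},\sigma_{1},\sigma_{2})$ is what makes this work with a single column in $V$, since both Kronecker factors then equal the one vector $\cK(\sigma_{1})^{-1}\cB(\sigma_{1})$ guaranteed to lie in $\mspan(V)$.

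The main obstacle is purely the Kronecker bookkeeping: one must track how the embedded factors $(I_{m}\otimes V)$ and $(V\otimes V)$ in the reduced $\hcN$ and $\hcH$ interact with the reduced input-to-state quantities, and apply the mixed-product property in the correct order so that the primal identity becomes applicable \emph{before} the dual identity is used to remove the outer projection. Everything else is routine substitution. I would also remark that, in contrast to \Cref{prp:genV}, the purely bilinear function $\genTF{3}{(N,(N,(B)))}$ is \emph{not} interpolated here, which is consistent with $V$ containing only the single vector $\cK(\sigma_{1})^{-1}\cB(\sigma_{1})$ and none of the bilinear-chain contributions.
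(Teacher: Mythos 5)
Your proof is correct, but it takes a different route from the paper: the paper disposes of \Cref{cor:genVW} in one line, citing an external result (Lem.~6.15 of the corresponding author's dissertation) and restricting it to two interpolation points, whereas you give a self-contained argument. Your argument is, in substance, a transplant of the technique the paper uses for part~(c) of \Cref{thm:symVW} -- the primal identity $V\hcK(\sigma_{1})^{-1}\hcB(\sigma_{1}) = \cK(\sigma_{1})^{-1}\cB(\sigma_{1})$ from $\mspan(V) \supseteq \mspan(\cK(\sigma_{1})^{-1}\cB(\sigma_{1}))$, the dual identity $\hcC(\sigma_{2})\hcK(\sigma_{2})^{-1}W^{\herm} = \cC(\sigma_{2})\cK(\sigma_{2})^{-1}$ from the condition on $\mspan(W)$, and the mixed-product rules $(I_{m}\otimes V)(I_{m}\otimes X) = I_{m}\otimes(VX)$ and $(V\otimes V)(X\otimes Y) = (VX)\otimes(VY)$ to collapse the reduced Kronecker factors -- applied to the generalized rather than the symmetric transfer functions. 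I verified each of the four substitution chains: in every case the primal identity is applied inside the Kronecker factor first and the dual identity strips the outer $W^{\herm}$ afterwards, exactly as you describe, and your observation that the repeated argument $\sigma_{1}$ in $\genTF{3}{(H,(B),(B))}(\sigma_{1},\sigma_{1},\sigma_{2})$ is what lets a single block in $V$ serve both Kronecker factors is the right explanation for why the statement is phrased with that repetition. What your route buys is a proof readable without access to the dissertation, at the cost of repeating machinery the paper prefers to reuse; what the paper's route buys is brevity and a pointer to the more general multi-point statement. One small point worth making explicit (the paper is equally silent about it): both identities require $W^{\herm}\cK(\sigma_{1})V$ and $W^{\herm}\cK(\sigma_{2})V$ to be invertible, which is the standard regularity assumption implicit in writing $\hcK(\sigma_{i})^{-1}$ at all.
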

\begin{proof}
  The result follows directly from~\cite[Lem.~6.15]{Wer21} by restriction to
  two interpolation points.
\end{proof}

Similar to \Cref{prp:symVW}, the result in \Cref{cor:genVW} states that the
interpolation of higher level transfer functions is possible in an implicit
way without evaluating any of the nonlinear terms.
\Cref{cor:genVW} shows the version of the implicit interpolation result with
the smallest achievable minimal subspace dimensions for the interpolation of the
third level generalized transfer function with quadratic term.
The choice of identical interpolation points will not further reduce the
dimensions of the projection spaces, but allows to replace the interpolation
of the first level generalized transfer function in two points by matching
the transfer function value and its derivative in one point;
see~\cite{BeaG09} and~\cite[Lem.~6.15]{Wer21}.


\section{Numerical experiments}%
\label{sec:examples}

Now, we employ the interpolation results from above for constructing
structured reduced-order quadratic-bilinear systems in two numerical examples.
The experiments were run on compute nodes of the \texttt{Greene}
high-performance computing cluster of the New York University using 16
processing cores of the Intel Xeon Platinum 8268 24C 205W CPU at 2.90\,GHz
and 32\,GB main memory.
We used \matlab{} 9.9.0.1467703 (R2020b) running on Red Hat Enterprise Linux
release 8.4 (Ootpa).
The source code, data and results of the numerical experiments are
open source/open access and available at~\cite{supWer23b}.


\subsection{Experimental setup}%
\label{subsec:setup}

In both numerical examples, we compute reduced-order models via
structure-preserving interpolation of the symmetric subsystem transfer functions
and the generalized transfer functions, denoted by \texttt{SymInt}
and \texttt{GenInt}, respectively.
We compute models using either (i) only the construction of the basis matrix $V$
and a one-sided projection by setting $W = V$, which we abbreviate further on
by \texttt{V}, or (ii) by also constructing the left basis matrix $W$ for a
two-sided projection following the results in \Cref{thm:symVW,cor:genVW}, which
we abbreviate by \texttt{VW}.
For simplicity, the interpolation points are chosen logarithmically equidistant
on the imaginary axis in all cases.
If we compute only the basis matrices for interpolation without additional
information, this is denoted by \texttt{equi}.
On the other hand, if we oversampled the frequency range of interest and
compressed the resulting basis to a prescribed dimension, e.g., using pivoted
QR, this is denoted by \texttt{avg}; cf.~\cite[Rem.~3.3]{Wer21}.
In all cases, we focus on the interpolation of either (i) the first two
symmetric subsystem transfer functions or (ii) the first two levels of the
generalized transfer functions and the third level generalized transfer
function containing the quadratic term.
The following overview summarizes the considered interpolation methods:
\begin{description}
  \item[\symintVequi{}] is the interpolation of symmetric subsystem transfer
    functions via one-sided projection by constructing the basis matrix $V$.
  \item[\symintVWequi{}] is the interpolation of symmetric subsystem transfer
    functions via two-sided projection.
    Additional interpolation points are selected for the construction of $W$ to
    match the dimension of $V$.
  \item[\symintVavg{}] is the approximation of an interpolation basis for
    symmetric subsystem transfer functions using only samples for the
    construction of $V$ and one-sided projection.
  \item[\symintVWavg{}] is the approximation of left and right interpolation
    bases for symmetric subsystem transfer functions using samples for the
    construction of $V$ and $W$ and two-sided projection.
    Additional interpolation points are selected for the construction of $W$ to
    match the computational work to the construction of $V$.
  \item[\genintVequi{}] is the interpolation of generalized transfer
    functions via one-sided projection by constructing the basis matrix $V$.
    Samples from the second and third level transfer functions are taken
    alternating.
  \item[\genintVWequi{}] is the interpolation of generalized transfer
    functions via two-sided projection.
    For the construction of $V$, samples from the second and third level
    transfer functions are taken alternating.
    Additional interpolation points are selected for the construction of $W$ to
    match the dimension of $V$.
  \item[\genintVavg{}] is the approximation of an interpolation basis for
    generalized transfer functions using only samples for the
    construction of $V$ and one-sided projection.
    At all interpolation points, second and third level samples are taken.
  \item[\symintVWavg{}] is the approximation of left and right interpolation
    bases for generalized transfer functions using samples for the
    construction of $V$ and $W$ and two-sided projection.
    For the construction of $V$, second and third level samples are taken at
    all interpolation points.
    Additional interpolation points are selected for the construction of $W$ to
    equalize the computational work to the construction of $V$.
\end{description}

As an additional comparison, we have computed reduced-order models via proper
orthogonal decomposition (POD).
All POD models have been trained via simulations of the unit step response
to remove the correlation of the training and test input signals.
For a fair comparison, the trajectory lengths used for POD are chosen with
respect to comparable amounts of computational work to the interpolation
methods.
We consider therefore:
\begin{description}
  \item[\pod{},] which has a computational workload similar to \symintVequi{}
    and \genintVequi{}, and
  \item[\podavg{},] which uses similar to \symintVavg{} and \genintVavg{}
    an oversampling and computes the orthogonal basis via truncated singular
    value decomposition.
\end{description}

For the comparison of the reduced-order models in time domain, we simulate the
models over finite time intervals using input signals taken from a
Gaussian process $\GP(\mu, \mathsf{K})$, with constant mean $\mu \in \R$
and the squared exponential kernel
\begin{equation*}
  \mathsf{K}(x, y) = \exp\left( -\frac{\lvert x - y \rvert^{2}}
    {2 \varsigma^{2}} \right),
\end{equation*}
where $\varsigma \geq 0$ is a smoothing parameter.
The parameters $\mu$ and $\varsigma$ are chosen independently for the two
examples and are given below.
For visualization, we compute and plot the maximum pointwise relative errors
\begin{equation*}
  \relerr(t) := \max_{j} \left\lvert \frac{y_{j}(t) - \hy_{j}(t)}{y_{j}(t)}
    \right\rvert.
\end{equation*}
Also, we compute discretized approximations of the relative $L_{2}$ and
$L_{\infty}$ errors via
\begin{equation*}
  \begin{aligned}
    \relerr_{L_{2}} & := \frac{\lVert \mvec(y_{\mathrm{h}} -
      \hy_{\mathrm{h}}) \rVert_{2}}{\lVert \mvec(y_{\mathrm{h}}) \rVert_{2}}
      &  \text{and} &&
    \relerr_{L_{\infty}} & := \frac{\lVert \mvec(y_{\mathrm{h}} -
      \hy_{\mathrm{h}}) \rVert_{\infty}}{\lVert \mvec(y_{\mathrm{h}})
      \rVert_{\infty}},
  \end{aligned}
\end{equation*}
where $y_{\mathrm{h}}, \hy_{\mathrm{h}} \in \R^{p \times n_{h}}$ are the
discretized output signals of the original and reduced-order model,
respectively, in the time interval $[0, t_{\mathrm{f}}]$, and $\mvec(.)$
is the vectorization operator.

In frequency domain, we consider the pointwise relative spectral norm errors
defined as
\begin{equation*}
  \begin{aligned}
    \relerr(\omega) & := \frac{\lVert \symTF{1}(\i \omega) -
        \hsymTF{1}(\i \omega) \rVert_{2}}
        {\lVert \symTF{1}(\i \omega)\rVert_{2}} & \text{and} \\
    \relerr(\omega_{1}, \omega_{2}) & := \frac{\lVert
        \symTF{2}(\i \omega_{1}, \i \omega_{2}) -
        \hsymTF{2}(\i \omega_{1}, \i \omega_{2}) \rVert_{2}}
        {\lVert \symTF{2}(\i \omega_{1}, \i \omega_{2}) \rVert_{2}},
  \end{aligned}
\end{equation*}
over the limited frequency intervals $\omega, \omega_{1}, \omega_{2} \in
[\omega_{\min}, \omega_{\max}]$.
Additionally, we compute approximations to the relative $\Linf$-norm errors
for the first and second symmetric subsystem transfer functions via
\begin{equation*}
  \begin{aligned}
    \relerr_{\Linf}^{(1)} & :=
      \frac{\max\limits_{\omega} \lVert \symTF{1}(\i \omega) -
      \hsymTF{1}(\i \omega) \rVert_{2}}
      {\max\limits_{\omega} \lVert \symTF{1}(\i \omega)\rVert_{2}} &
    \text{and} \\
    \relerr_{\Linf}^{(2)} & := \frac{\max\limits_{\omega_{1},\, \omega_{2}}
      \lVert \symTF{2}(\i \omega_{1}, \i \omega_{2}) -
      \hsymTF{2}(\i \omega_{1}, \i \omega_{2}) \rVert_{2}}
      {\max\limits_{\omega_{1},\, \omega_{2}}
      \lVert \symTF{2}(\i \omega_{1}, \i \omega_{2}) \rVert_{2}},
  \end{aligned}
\end{equation*}
using $500$ logarithmically equidistant sampling points in the frequency
interval of interest~$[\omega_{\min}, \omega_{\max}]$.

For further details on the experimental setup, we refer the reader to the
accompanying code package~\cite{supWer23b}.


\subsection{Quadratic time-delayed reaction-diffusion model}%
\label{subsec:timedelay}

\begin{table}[t]
  \centering
  \caption{Errors computed as shown in \Cref{subsec:setup} for the time-delay
    example:
    \pod{} computes the worst performing reduced-order model, while
    \podavg{} is only as good as the worst interpolation-based models.
    Here, \symintVWavg{} performs best w.r.t.\ three out of the four error
    measures, while according to $\relerr_{\Linf}^{(1)}$, the best
    reduced-order model is computed by \genintVWavg{}.}
  \label{tab:timedelay}
  \vspace{.5\baselineskip}
  
  \begin{tabular}{lrrrr}
    \hline\noalign{\smallskip}
    &
      \multicolumn{1}{c}{$\boldsymbol{\relerr_{L_{2}}}$} &
      \multicolumn{1}{c}{$\boldsymbol{\relerr_{L_{\infty}}}$} & 
      \multicolumn{1}{c}{$\boldsymbol{\relerr_{\Linf}^{(1)}}$} &
      \multicolumn{1}{c}{$\boldsymbol{\relerr_{\Linf}^{(2)}}$} \\
    \noalign{\smallskip}\hline\noalign{\medskip}
    \symintVequi{} &
      $8.1604\texttt{e-}07$ &
      $1.8475\texttt{e-}06$ &
      $2.5851\texttt{e-}06$ &
      $3.4393\texttt{e-}06$ \\
    \symintVavg{} &
      $3.1414\texttt{e-}08$ &
      $4.9411\texttt{e-}08$ &
      $3.4598\texttt{e-}08$ &
      $1.0957\texttt{e-}06$ \\
    \symintVWequi{} &
      $1.0181\texttt{e-}05$ &
      $4.9409\texttt{e-}05$ &
      $1.0705\texttt{e-}07$ &
      $4.3354\texttt{e-}06$ \\
    \symintVWavg{} &
      $6.1325\texttt{e-}09$ &
      $2.4509\texttt{e-}08$ &
      $4.9776\texttt{e-}10$ &
      $2.5239\texttt{e-}09$ \\
    \noalign{\medskip}\hline\noalign{\medskip}
    \genintVequi{} &
      $3.2373\texttt{e-}06$ &
      $4.3868\texttt{e-}06$ &
      $1.1713\texttt{e-}05$ &
      $5.3102\texttt{e-}06$ \\
    \genintVavg{} &
      $3.9579\texttt{e-}08$ &
      $6.7805\texttt{e-}08$ &
      $3.1402\texttt{e-}08$ &
      $1.0562\texttt{e-}06$ \\
    \genintVWequi{} &
      $1.1332\texttt{e-}05$ &
      $2.9810\texttt{e-}05$ &
      $7.0970\texttt{e-}07$ &
      $2.1736\texttt{e-}06$ \\
    \genintVWavg{} &
      $1.0280\texttt{e-}08$ &
      $4.3505\texttt{e-}08$ &
      $1.1418\texttt{e-}10$ &
      $4.1712\texttt{e-}09$ \\
    \noalign{\medskip}\hline\noalign{\medskip}
    \pod{} &
      $5.2902\texttt{e-}04$ &
      $1.0422\texttt{e-}03$ &
      $6.2910\texttt{e-}04$ &
      $2.8691\texttt{e-}04$ \\
    \podavg{} &
      $2.1228\texttt{e-}05$ &
      $5.1048\texttt{e-}05$ &
      $9.8140\texttt{e-}06$ &
      $3.7744\texttt{e-}05$ \\
    \noalign{\medskip}\hline\noalign{\smallskip}
  \end{tabular}
\end{table}

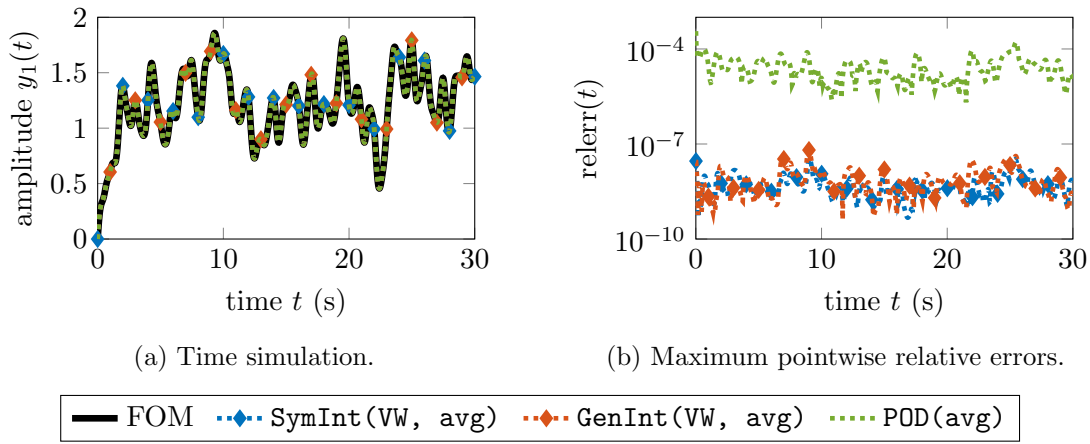
\begin{figure}[t]
  \centering
  \begin{subfigure}[b]{.49\textwidth}
    \centering
  \tikzexternalenable%
  \tikzsetnextfilename{timedelay_time_sim_best}%
  \begin{tikzpicture}
  \pgfplotstableread{graphics/data/timedelay_time_sim_best.dat}\tableSIM
  
  \begin{axis}[%
    width  = .675\textwidth,
    height = .4\textwidth,
    scale only axis,
    xmin = 0,
    xmax = 30,
    ymin = 0,
    ymax = 2,
    xminorticks = false,
    yminorticks = false,
    xlabel = {time $t$ (s)},
    ylabel = {amplitude $y_{1}(t)$},
    ylabel style   = {yshift = -.3em},
    scaled x ticks = false,
    x tick label style = {/pgf/number format/fixed},
    clip mode          = individual]
    
    \addplot[fom] table[x index = 0, y index = 1] {\tableSIM};
    \addplot[symint, VW, avg, mark repeat = 200]
      table[x index = 0, y index = 3] {\tableSIM};
    \addplot[genint, VW, avg, mark repeat = 200, mark phase = 100]
      table[x index = 0, y index = 5] {\tableSIM};
    \addplot[pod, avg] table[x index = 0, y index = 7] {\tableSIM};
  \end{axis}
\end{tikzpicture}%
  \tikzexternaldisable%

    \caption{Time simulation.}
    \label{fig:timedelay_time_sim_best}
  \end{subfigure}%
  \hfill%
  \begin{subfigure}[b]{.49\textwidth}
    \centering
  \tikzexternalenable%
  \tikzsetnextfilename{timedelay_time_relerr_best}%
  \begin{tikzpicture}
  \pgfplotstableread{graphics/data/timedelay_time_relerr_best.dat}\tableERR
  
  \begin{semilogyaxis}[%
    width  = .675\textwidth,
    height = .4\textwidth,
    scale only axis,
    xmin = 0,
    xmax = 30,
    ymin = 1e-10,
    ymax = 1e-3,
    xminorticks = false,
    yminorticks = false,
    xlabel = {time $t$ (s)},
    ylabel = {$\relerr(t)$},
    ylabel style   = {yshift = -.3em},
    scaled x ticks = false,
    x tick label style = {/pgf/number format/fixed},
    clip mode          = individual]
    
    \addplot[symint, VW, avg, mark repeat = 200]
      table[x index = 0, y index = 1] {\tableERR};
    \addplot[genint, VW, avg, mark repeat = 200, mark phase = 100]
      table[x index = 0, y index = 2] {\tableERR};
    \addplot[pod, avg] table[x index = 0, y index = 3] {\tableERR};
  \end{semilogyaxis}
\end{tikzpicture}%
  \tikzexternaldisable%

    \caption{Maximum pointwise relative errors.}
    \label{fig:timedelay_time_relerr_best}
  \end{subfigure}
  
  \vspace{.5\baselineskip}
  \tikzexternalenable%
  \tikzsetnextfilename{timedelay_time_legend}%
  \begin{tikzpicture}
  \begin{axis}[%
    hide axis,
    width  = 1mm,
    height = 1mm,
    scale only axis,
    xmin = 0,
    xmax = 1,
    ymin = 0,
    ymax = 1,
    legend columns = 4, 
    legend style   = {
      at     = {(0,0)},
      anchor = center,
      /tikz/every even column/.append style = {column sep = 0.2cm}},
    legend cell align  = {left},
    clip mode          = individual]
    
    \addlegendimage{fom}
    \addlegendentry{FOM}
    
    \addlegendimage{symint, VW, avg}
    \addlegendentry{\symintVWavg}
    
    \addlegendimage{genint, VW, avg}
    \addlegendentry{\genintVWavg}
    
    \addlegendimage{pod, avg}
    \addlegendentry{\podavg}
  \end{axis}
\end{tikzpicture}%
  \tikzexternaldisable%

  \caption{Time simulation of the time-delay example: The best reduced-order
    models from each generating approach are shown.
    All reduced-order models can recover the system behavior for the given
    input signal, but the interpolation-based reduced-order models
    perform around four orders of magnitude better in terms of accuracy than the
    model generated by \podavg{}.}
  \label{fig:timedelay_time}
\end{figure}

\begin{figure}[t]
  \centering
  \begin{subfigure}[b]{.49\textwidth}
    \centering
  \tikzexternalenable%
  \tikzsetnextfilename{timedelay_g1_tf_best}%
  \begin{tikzpicture}
  \pgfplotstableread{graphics/data/timedelay_g1_tf_best.dat}\tableTF
  
  \begin{loglogaxis}[%
    width  = .675\textwidth,
    height = .4\textwidth,
    scale only axis,
    xmin = 1e-3,
    xmax = 1e+3,
    ymin = 1e-3,
    ymax = 3e+0,
    xminorticks = false,
    yminorticks = false,
    xlabel = {frequency $\omega$ (rad/s)},
    ylabel = {magnitude},
    ylabel style   = {yshift = -.3em},
    scaled x ticks = false,
    x tick label style = {/pgf/number format/fixed},
    clip mode          = individual]
    
    \addplot[fom] table[x index = 0, y index = 1] {\tableTF};
    \addplot[symint, VW, avg, mark repeat = 50]
      table[x index = 0, y index = 2] {\tableTF};
    \addplot[genint, VW, avg, mark repeat = 50, mark phase = 20]
      table[x index = 0, y index = 3] {\tableTF};
    \addplot[pod, avg] table[x index = 0, y index = 4] {\tableTF};
  \end{loglogaxis}
\end{tikzpicture}%
  \tikzexternaldisable%

    \caption{Linear transfer functions.}
    \label{fig:timedelay_g1_tf_best}
  \end{subfigure}%
  \hfill%
  \begin{subfigure}[b]{.49\textwidth}
    \centering
  \tikzexternalenable%
  \tikzsetnextfilename{timedelay_g1_relerr_best}%
  \begin{tikzpicture}
  \pgfplotstableread{graphics/data/timedelay_g1_relerr_best.dat}\tableERR
  
  \begin{loglogaxis}[%
    width  = .675\textwidth,
    height = .4\textwidth,
    scale only axis,
    xmin = 1e-3,
    xmax = 1e+3,
    ymin = 1e-14,
    ymax = 1e-2,
    xminorticks = false,
    yminorticks = false,
    xlabel = {frequency $\omega$ (rad/s)},
    ylabel = {$\relerr(\omega)$},
    ylabel style   = {yshift = -.3em},
    scaled x ticks = false,
    x tick label style = {/pgf/number format/fixed},
    clip mode          = individual]
    
    \addplot[symint, VW, avg, mark repeat = 50]
      table[x index = 0, y index = 1] {\tableERR};
    \addplot[genint, VW, avg, mark repeat = 50, mark phase = 20]
      table[x index = 0, y index = 2] {\tableERR};
    \addplot[pod, avg] table[x index = 0, y index = 3] {\tableERR};
  \end{loglogaxis}
\end{tikzpicture}%
  \tikzexternaldisable%

    \caption{Pointwise relative errors.}
    \label{fig:timedelay_g1_relerr_best}
  \end{subfigure}
  
  \vspace{.5\baselineskip}
  \tikzexternalenable%
  \tikzsetnextfilename{timedelay_g1_legend}%
  \begin{tikzpicture}
  \begin{axis}[%
    hide axis,
    width  = 1mm,
    height = 1mm,
    scale only axis,
    xmin = 0,
    xmax = 1,
    ymin = 0,
    ymax = 1,
    legend columns = 4, 
    legend style   = {
      at     = {(0,0)},
      anchor = center,
      /tikz/every even column/.append style = {column sep = 0.2cm}},
    legend cell align  = {left},
    clip mode          = individual]
    
    \addlegendimage{fom}
    \addlegendentry{FOM}
    
    \addlegendimage{symint, VW, avg}
    \addlegendentry{\symintVWavg}
    
    \addlegendimage{genint, VW, avg}
    \addlegendentry{\genintVWavg}
    
    \addlegendimage{pod, avg}
    \addlegendentry{\podavg}
  \end{axis}
\end{tikzpicture}%
  \tikzexternaldisable%

  \caption{Sigma plots showing $\lVert G(\i \omega)) \rVert_{2}$ of the first
    symmetric subsystem transfer function of the time-delay example:
    The best reduced-order models from each generating approach are shown.
    All reduced-order models can recover the system behavior for the given
    input signal, but the interpolation-based reduced-order models
    perform around six orders of magnitude better in terms of accuracy than the
    model generated by \podavg{}.}
  \label{fig:timedelay_g1}
\end{figure}
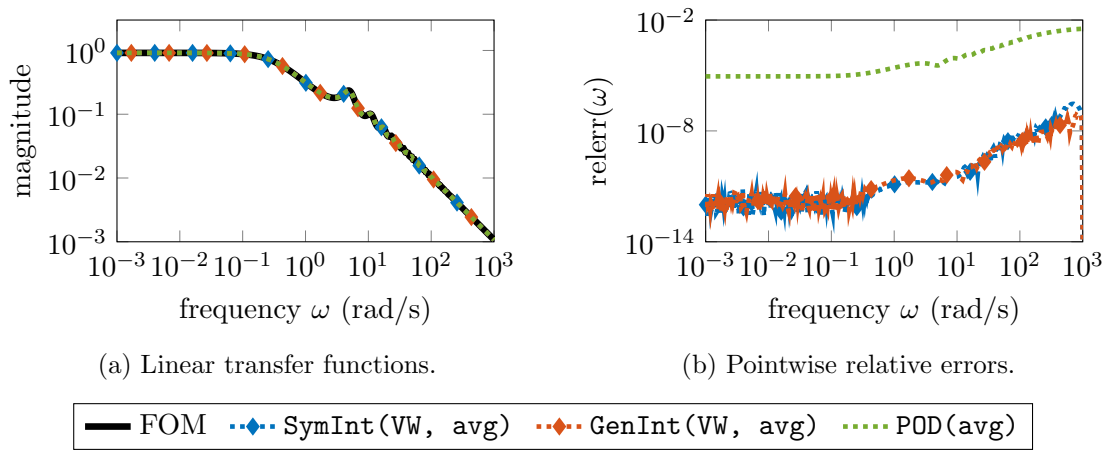

\begin{figure}[t]
  \centering
  \begin{subfigure}[b]{.49\textwidth}
    \centering
  \tikzexternalenable%
  \tikzsetnextfilename{timedelay_g2_symvwavg}%
  \begin{tikzpicture}
  \begin{loglogaxis}[
    view   = {0}{90},
    width  = .675\textwidth,
    height = .4\textwidth,
    scale only axis,
    axis on top,
    xmin   = 1e-3,
    xmax   = 1e+3,
    ymin   = 1e-3,
    ymax   = 1e+3,
    xtick  = {1e-3, 1e-1, 1e+1, 1e+3},
    ytick  = {1e-3, 1e-1, 1e+1, 1e+3},
    xminorticks = false,
    yminorticks = false,
    xlabel = {frequency $\omega_{1}$ (rad/s)},
    ylabel = {frequency $\omega_{2}$ (rad/s)},
    ylabel style = {yshift = -.3em},
    scaled x ticks = false,
    x tick label style = {/pgf/number format/fixed}]
        
      \addplot graphics[xmin = 1e-3, xmax = 1e+3, ymin = 1e-3, ymax = 1e+3]
        {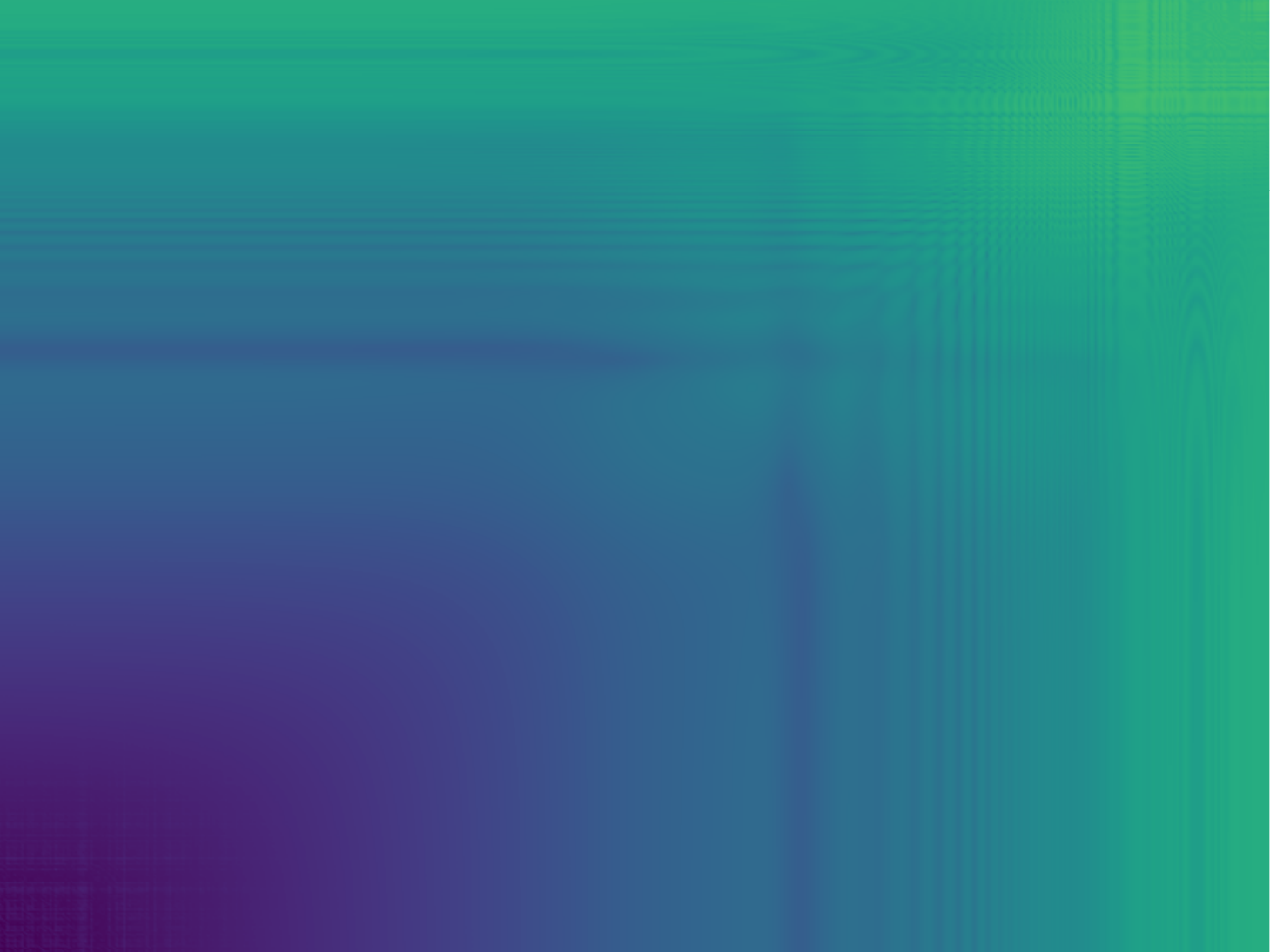};
            
  \end{loglogaxis}
\end{tikzpicture}%
  \tikzexternaldisable%

    \caption{\symintVWavg{}.}
    \label{fig:timedelay_g2_symvwavg}
  \end{subfigure}%
  \hfill%
  \begin{subfigure}[b]{.49\textwidth}
    \centering
  \tikzexternalenable%
  \tikzsetnextfilename{timedelay_g2_genvwavg}%
  \begin{tikzpicture}
  \begin{loglogaxis}[
    view   = {0}{90},
    width  = .675\textwidth,
    height = .4\textwidth,
    scale only axis,
    axis on top,
    xmin   = 1e-3,
    xmax   = 1e+3,
    ymin   = 1e-3,
    ymax   = 1e+3,
    xtick  = {1e-3, 1e-1, 1e+1, 1e+3},
    ytick  = {1e-3, 1e-1, 1e+1, 1e+3},
    xminorticks = false,
    yminorticks = false,
    xlabel = {frequency $\omega_{1}$ (rad/s)},
    ylabel = {frequency $\omega_{2}$ (rad/s)},
    ylabel style = {yshift = -.3em},
    scaled x ticks = false,
    x tick label style = {/pgf/number format/fixed}]
        
      \addplot graphics[xmin = 1e-3, xmax = 1e+3, ymin = 1e-3, ymax = 1e+3]
        {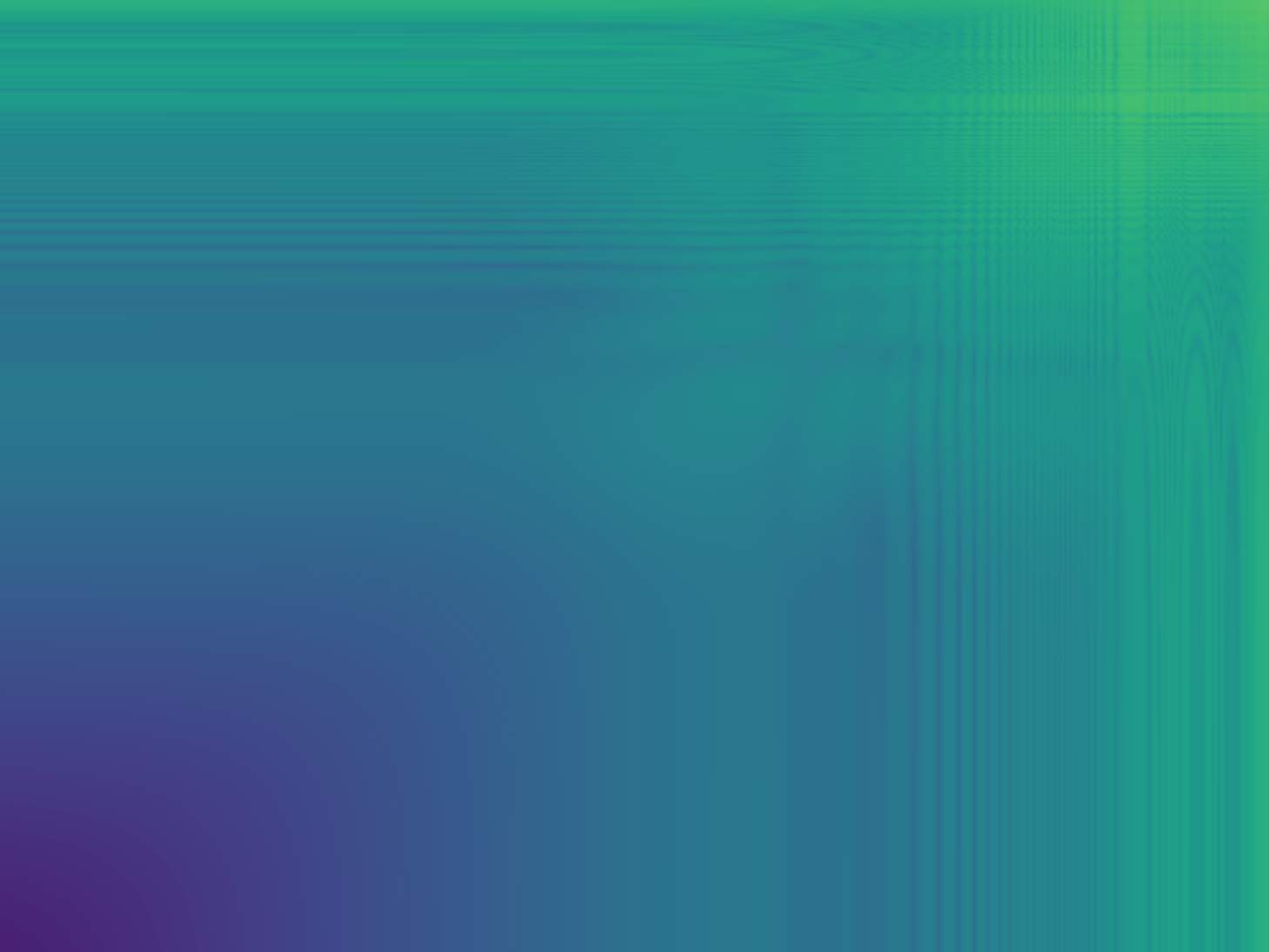};
            
  \end{loglogaxis}
\end{tikzpicture}%
  \tikzexternaldisable%

    \caption{\genintVWavg{}.}
    \label{fig:timedelay_g2_genvwavg}
  \end{subfigure}
  
  \vspace{.5\baselineskip}
  \begin{subfigure}{.49\textwidth}
    \centering
  \tikzexternalenable%
  \tikzsetnextfilename{timedelay_g2_podavg}%
  \begin{tikzpicture}
  \begin{loglogaxis}[
    view   = {0}{90},
    width  = .675\textwidth,
    height = .4\textwidth,
    scale only axis,
    axis on top,
    xmin   = 1e-3,
    xmax   = 1e+3,
    ymin   = 1e-3,
    ymax   = 1e+3,
    xtick  = {1e-3, 1e-1, 1e+1, 1e+3},
    ytick  = {1e-3, 1e-1, 1e+1, 1e+3},
    xminorticks = false,
    yminorticks = false,
    xlabel = {frequency $\omega_{1}$ (rad/s)},
    ylabel = {frequency $\omega_{2}$ (rad/s)},
    ylabel style = {yshift = -.3em},
    scaled x ticks = false,
    x tick label style = {/pgf/number format/fixed}]
        
      \addplot graphics[xmin = 1e-3, xmax = 1e+3, ymin = 1e-3, ymax = 1e+3]
        {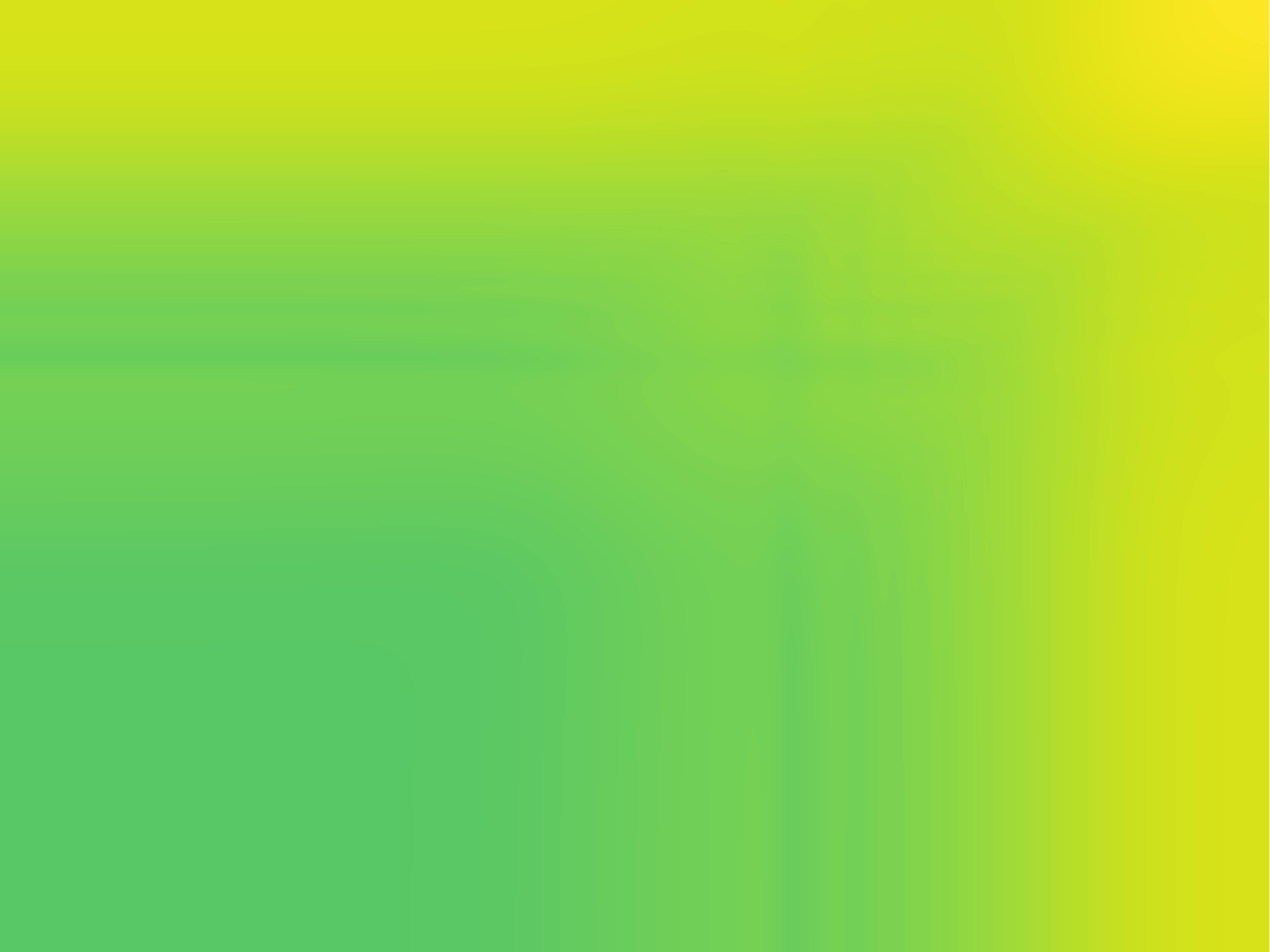};
            
  \end{loglogaxis}
\end{tikzpicture}%
  \tikzexternaldisable%

    \caption{\podavg{}.}
    \label{fig:timedelay_g2_podavg}
  \end{subfigure}%
  \hfill%
  \begin{subfigure}{.49\textwidth}
    \raggedleft
  \tikzexternalenable%
  \tikzsetnextfilename{timedelay_g2_legend}%
  \begin{tikzpicture}
  \node[draw = none, minimum width = 0cm, inner sep = 0cm](start){};
  \node(leg) at (start.north east) [anchor = north west]{\tikz
  \begin{axis}[%
    hide axis,
    scale only axis,
    width  = .8\textwidth,
    height = .1cm,
    point meta min = -11.5618,
    point meta max = -1.9278,
    colorbar,
    colorbar horizontal,
    colorbar style = {
      xticklabel = $10^{\pgfmathparse{\tick}
        \pgfmathprintnumber\pgfmathresult}$,
      at     = {(.5, 0)},
      anchor = north},
    scaled x ticks = false,
    x tick label style = {/pgf/number format/fixed}]
  \end{axis};};
  \node[draw = none, minimum width = 0cm, inner sep = 0cm](end)
    at (leg.north east) [anchor = north west]{};
\end{tikzpicture}%
  \tikzexternaldisable%

    \vspace{2\baselineskip}
  \end{subfigure}
  
  \caption{Second symmetric subsystem transfer function relative approximation
    errors $\relerr(\omega_{1}, \omega_{2})$ of the time-delay example:
    The best reduced-order models from each generating approach are shown.
    The errors of both interpolation-based reduced-order models are at least
    four orders of magnitude better than those of the \podavg{} model.}
  \label{fig:timedelay_g2}
\end{figure}

As first example, we consider the time-delayed heated rod with bilinear
feedback from~\cite{GosPBetal19,BenGW22}, to which we append a quadratic
reaction term to obtain
\begin{equation*}
  \begin{aligned}
    \partial_{\rm{t}} \nu(\zeta, t) & = \Delta \nu(\zeta, t)
      - 2 \sin(\zeta) \nu(\zeta, t) 
      + 2 \sin(\zeta) \nu(\zeta, t - \tau)\\
    & \quad{}-{}
      2 \sin(\zeta) \nu(\zeta, t)^{2}
      + \sum\limits_{j = 1}^{m} b_{j}(\zeta)(\nu(\zeta, t)  + 1) u_{j}(t),
  \end{aligned}
\end{equation*}
with $(t, \zeta) \in (0, t_{\mathrm{f}}) \times (0, \pi)$, boundary
conditions $\nu(t, 0) = \nu(t, \pi) = 0$ for all $t \in [0, t_{\mathrm{f}}]$,
and the constant time delay $\tau = 1$.
After spatial discretization with central finite differences, we obtain a
quadratic-bilinear time-delay system of the form
\begin{equation*}
  \begin{aligned}
    E \dot{x}(t) & = A x(t) + A_{\mathrm{d}} x(t - \tau) + H (x(t) \otimes x(t))
      + \sum\limits_{j = 1}^{m} N_{k} x(t) u_{j}(t) + B u(t),\\
    y(t) & = C x(t),
  \end{aligned}
\end{equation*}
with $E, A, A_{\mathrm{d}}, N_{j} \in \R^{n \times n}$, for $j = 1, \ldots, m$,
$H \in \R^{n \times n^{2}}$, $B \in \R^{n \times m}$ and
$C \in \R^{p \times n}$.
For our experiments, we have chosen $n = 2\,000$, $m = 2$ and $p = 2$, where
$u_{1}$ controls the temperature of the first third of the rod and $u_{2}$ the
rest, and $y_{1}$ observes the temperature of the first half of the rod and
$y_{2}$ of the second half.
The system has zero initial conditions $x(t) = 0$ for all $t \leq 0$.

The reduced-order models are computed as explained in \Cref{subsec:setup} with
a reduced order of $r = 24$.
In \Cref{tab:timedelay}, we can see that all reduced-order models perform well
for this example.
However, the interpolation-based models provide smaller errors than those
generated by POD, where the better POD model computed by \podavg{} performs
mildly worse than the worst interpolation-based reduced-order models.
Comparing the different interpolation approaches we can observe that mostly,
the interpolation of symmetric transfer functions performs better in the
time simulations and in frequency domain than the interpolation of the 
generalized transfer functions.
For the reduced-order models that provide exact interpolation (\texttt{equi}),
the sampling of higher-order terms in the generalized transfer function needed
to be restricted to match the reduced basis dimension.
This restriction is removed in \texttt{avg} such that more information about
the bilinear and quadratic terms can be obtained in sampling the generalized
transfer functions compared to the symmetric transfer function setting, but
this does only lead to a better reduced-order model when measured with
$\relerr_{\Linf}^{(1)}$.

\Cref{fig:timedelay_time} shows the time simulation of the full-order model
and the best performing reduced-order models from each method over the time
interval $[0, 30]$\,s.
We restricted \Cref{fig:timedelay_time_sim_best} to only the first output
signal for clarity, but the pointwise relative errors in
\Cref{fig:timedelay_time_relerr_best} are computed over both output entries.
For the input signals, we have chosen the mean $\mu = 2$ and the parameter
$\varsigma = 0.25$.
The interpolation-based methods clearly outperform \podavg{} by approximately
four orders of magnitude.
There is no significant difference between the errors of \symintVWavg{}
and \genintVWavg{}.

Similar results can be observed in frequency domain.
The first symmetric subsystem transfer functions are shown in
\Cref{fig:timedelay_g1}, while \Cref{fig:timedelay_g2} illustrates the
pointwise relative errors of the second symmetric subsystem transfer functions.
In both cases, we only show the best performing methods in the frequency
interval $\omega, \omega_{1}, \omega_{2} \in [10^{-3}, 10^{3}]$\,rad/s.
As in the time domain, the interpolation-based methods outperform \podavg{}
by several orders of magnitude in terms of accuracy.
Further plots of the other methods in time and frequency domain can be found in 
the accompanying code package~\cite{supWer23b}.


\subsection{Particle motion in one-dimensional crystal structures}%
\label{subsec:todalattice}

\begin{table}[t]
  \centering
  \caption{Error table of the Toda lattice example:
    The errors are computed as shown in \Cref{subsec:setup}. 
    Only the interpolation methods with one-sided projections provide
    reduced-order models that are stable in the time simulation.
    The models with $\infty$ error are unstable.
    \symintVequi{} outperforms \genintVequi{} by around a factor of $2$.}
  \label{tab:todalattice}
  \vspace{.5\baselineskip}
  
  \begin{tabular}{lrrrr}
    \hline\noalign{\smallskip}
    &
      \multicolumn{1}{c}{$\boldsymbol{\relerr_{L_{2}}}$} &
      \multicolumn{1}{c}{$\boldsymbol{\relerr_{L_{\infty}}}$} & 
      \multicolumn{1}{c}{$\boldsymbol{\relerr_{\Linf}^{(1)}}$} &
      \multicolumn{1}{c}{$\boldsymbol{\relerr_{\Linf}^{(2)}}$} \\
    \noalign{\smallskip}\hline\noalign{\medskip}
    \symintVequi{} &
      $2.4797\texttt{e-}04$ &
      $4.1415\texttt{e-}04$ &
      $4.5518\texttt{e-}03$ &
      $4.6131\texttt{e-}02$ \\
    \symintVavg{} &
      $1.4565\texttt{e-}02$ &
      $1.7264\texttt{e-}02$ &
      $1.4862\texttt{e-}02$ &
      $9.4341\texttt{e-}02$ \\
    \symintVWequi{} &
      $\infty$ &
      $\infty$ &
      $3.7049\texttt{e-}03$ &
      $4.4699\texttt{e-}02$ \\
    \symintVWavg{} &
      $\infty$ &
      $\infty$ &
      $2.3354\texttt{e-}05$ &
      $2.3035\texttt{e-}01$ \\
    \noalign{\medskip}\hline\noalign{\medskip}
    \genintVequi{} &
      $5.0036\texttt{e-}04$ &
      $8.4421\texttt{e-}04$ &
      $1.0227\texttt{e-}02$ &
      $5.6204\texttt{e-}02$ \\
    \genintVavg{} &
      $4.2052\texttt{e-}03$ &
      $8.5704\texttt{e-}03$ &
      $1.8517\texttt{e-}03$ &
      $2.6190\texttt{e-}02$ \\
    \genintVWequi{} &
      $\infty$ &
      $\infty$ &
      $1.3348\texttt{e-}02$ &
      $1.3706\texttt{e+}01$ \\
    \genintVWavg{} &
      $\infty$ &
      $\infty$ &
      $8.2451\texttt{e-}06$ &
      $4.5499\texttt{e-}02$ \\
    \noalign{\medskip}\hline\noalign{\medskip}
    \pod{} &
      $\infty$ &
      $\infty$ &
      $8.1769\texttt{e-}01$ &
      $7.7696\texttt{e-}01$ \\
    \podavg{} &
      $\infty$ &
      $\infty$ &
      $4.6483\texttt{e-}01$ &
      $6.2382\texttt{e-}01$ \\
    \noalign{\medskip}\hline\noalign{\smallskip}
  \end{tabular}
\end{table}

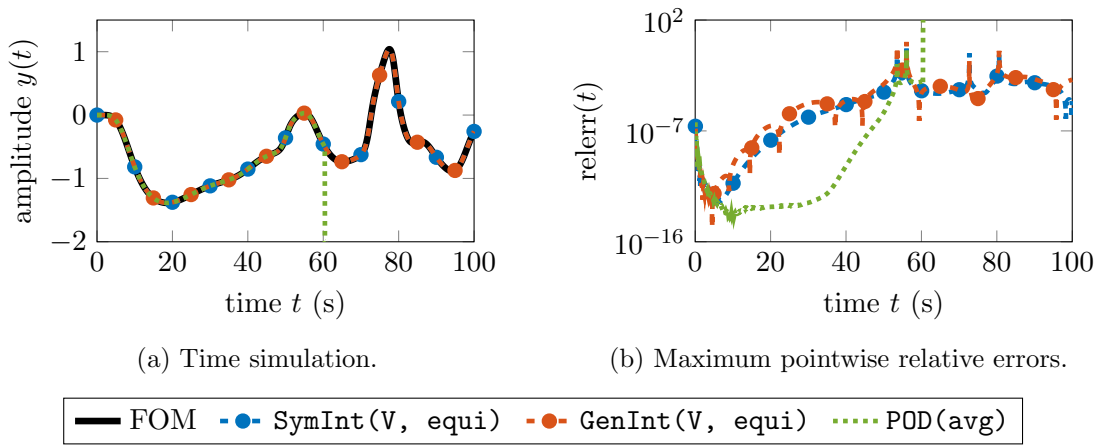
\begin{figure}[t]
  \centering
  \begin{subfigure}[b]{.49\textwidth}
    \centering
  \tikzexternalenable%
  \tikzsetnextfilename{todalattice_time_sim_best}%
  \begin{tikzpicture}
  \pgfplotstableread{graphics/data/todalattice_time_sim_best.dat}\tableSIM
  
  \begin{axis}[%
    width  = .675\textwidth,
    height = .4\textwidth,
    scale only axis,
    xmin = 0,
    xmax = 100,
    ymin = -2.0,
    ymax = 1.5,
    restrict y to domain* = -2.0:1.5,
    xminorticks = false,
    yminorticks = false,
    xlabel = {time $t$ (s)},
    ylabel = {amplitude $y(t)$},
    ylabel style   = {yshift = -.3em},
    scaled x ticks = false,
    x tick label style = {/pgf/number format/fixed},
    clip mode          = individual]
    
    \addplot[fom] table[x index = 0, y index = 1] {\tableSIM};
    \addplot[symint, V, equi, mark repeat = 200]
      table[x index = 0, y index = 2] {\tableSIM};
    \addplot[genint, V, equi, mark repeat = 200, mark phase = 100]
      table[x index = 0, y index = 3] {\tableSIM};
    \addplot[pod, avg] table[x index = 0, y index = 4] {\tableSIM};
  \end{axis}
\end{tikzpicture}%
  \tikzexternaldisable%

    \caption{Time simulation.}
    \label{fig:todalattice_time_sim_best}
  \end{subfigure}%
  \hfill%
  \begin{subfigure}[b]{.49\textwidth}
    \centering
  \tikzexternalenable%
  \tikzsetnextfilename{todalattice_time_relerr_best}%
  \begin{tikzpicture}
  \pgfplotstableread{graphics/data/todalattice_time_relerr_best.dat}\tableERR
  
  \begin{semilogyaxis}[%
    width  = .675\textwidth,
    height = .4\textwidth,
    scale only axis,
    xmin = 0,
    xmax = 100,
    ymin = 1e-16,
    ymax = 1e+2,
    xminorticks = false,
    yminorticks = false,
    xlabel = {time $t$ (s)},
    ylabel = {$\relerr(t)$},
    ylabel style   = {yshift = -.3em},
    scaled x ticks = false,
    x tick label style = {/pgf/number format/fixed},
    clip mode          = individual]
    
    \addplot[symint, V, equi, mark repeat = 200]
      table[x index = 0, y index = 1] {\tableERR};
    \addplot[genint, V, equi, mark repeat = 200, mark phase = 100]
      table[x index = 0, y index = 2] {\tableERR};
    \addplot[pod, avg] table[x index = 0, y index = 3] {\tableERR};
  \end{semilogyaxis}
\end{tikzpicture}%
  \tikzexternaldisable%

    \caption{Maximum pointwise relative errors.}
    \label{fig:todalattice_time_relerr_best}
  \end{subfigure}
  
  \vspace{.5\baselineskip}
  \tikzexternalenable%
  \tikzsetnextfilename{todalattice_time_legend}%
  \begin{tikzpicture}
  \begin{axis}[%
    hide axis,
    width  = 1mm,
    height = 1mm,
    scale only axis,
    xmin = 0,
    xmax = 1,
    ymin = 0,
    ymax = 1,
    legend columns = 4, 
    legend style   = {
      at     = {(0,0)},
      anchor = center,
      /tikz/every even column/.append style = {column sep = 0.2cm}},
    legend cell align  = {left},
    clip mode          = individual]
    
    \addlegendimage{fom}
    \addlegendentry{FOM}
    
    \addlegendimage{symint, V, equi}
    \addlegendentry{\symintVequi}
    
    \addlegendimage{genint, V, equi}
    \addlegendentry{\genintVequi}
    
    \addlegendimage{pod, avg}
    \addlegendentry{\podavg}
  \end{axis}
\end{tikzpicture}%
  \tikzexternaldisable%

  \caption{Time simulation of the Toda lattice example:
    The best reduced-order models from each generating approach are shown.
    Only the interpolation-based reduced-order models recover the system
    behavior over the full time interval, while \podavg{} becomes unstable
    after about $60$\,s.}
  \label{fig:todalattice_time}
\end{figure}

\begin{figure}[t]
  \centering
  \begin{subfigure}[b]{.49\textwidth}
    \centering
  \tikzexternalenable%
  \tikzsetnextfilename{todalattice_g1_tf_best}%
  \begin{tikzpicture}
  \pgfplotstableread{graphics/data/todalattice_g1_tf_best.dat}\tableTF
  
  \begin{loglogaxis}[%
    width  = .675\textwidth,
    height = .4\textwidth,
    scale only axis,
    xmin = 1e-2,
    xmax = 1e+2,
    ymin = 1e-16,
    ymax = 1e+2,
    xminorticks = false,
    yminorticks = false,
    xlabel = {frequency $\omega$ (rad/s)},
    ylabel = {magnitude},
    ylabel style   = {yshift = -.3em},
    scaled x ticks = false,
    x tick label style = {/pgf/number format/fixed},
    clip mode          = individual]
    
    \addplot[fom] table[x index = 0, y index = 1] {\tableTF};
    \addplot[symint, V, equi, mark repeat = 50]
      table[x index = 0, y index = 2] {\tableTF};
    \addplot[genint, V, equi, mark repeat = 50, mark phase = 20]
      table[x index = 0, y index = 3] {\tableTF};
    \addplot[pod, avg] table[x index = 0, y index = 4] {\tableTF};
  \end{loglogaxis}
\end{tikzpicture}%
  \tikzexternaldisable%

    \caption{Linear transfer functions.}
    \label{fig:todalattice_g1_tf_best}
  \end{subfigure}%
  \hfill%
  \begin{subfigure}[b]{.49\textwidth}
    \centering
  \tikzexternalenable%
  \tikzsetnextfilename{todalattice_g1_relerr_best}%
  \begin{tikzpicture}
  \pgfplotstableread{graphics/data/todalattice_g1_relerr_best.dat}\tableERR
  
  \begin{loglogaxis}[%
    width  = .675\textwidth,
    height = .4\textwidth,
    scale only axis,
    xmin = 1e-2,
    xmax = 1e+2,
    ymin = 1e-14,
    ymax = 1e+0,
    xminorticks = false,
    yminorticks = false,
    xlabel = {frequency $\omega$ (rad/s)},
    ylabel = {$\relerr(\omega)$},
    ylabel style   = {yshift = -.3em},
    scaled x ticks = false,
    x tick label style = {/pgf/number format/fixed},
    clip mode          = individual]
    
    \addplot[symint, V, equi, mark repeat = 50]
      table[x index = 0, y index = 1] {\tableERR};
    \addplot[genint, V, equi, mark repeat = 50, mark phase = 20]
      table[x index = 0, y index = 2] {\tableERR};
    \addplot[pod, avg] table[x index = 0, y index = 3] {\tableERR};
  \end{loglogaxis}
\end{tikzpicture}%
  \tikzexternaldisable%

    \caption{Pointwise relative errors.}
    \label{fig:todalattice_g1_relerr_best}
  \end{subfigure}
  
  \vspace{.5\baselineskip}
  \tikzexternalenable%
  \tikzsetnextfilename{todalattice_g1_legend}%
  \begin{tikzpicture}
  \begin{axis}[%
    hide axis,
    width  = 1mm,
    height = 1mm,
    scale only axis,
    xmin = 0,
    xmax = 1,
    ymin = 0,
    ymax = 1,
    legend columns = 4, 
    legend style   = {
      at     = {(0,0)},
      anchor = center,
      /tikz/every even column/.append style = {column sep = 0.2cm}},
    legend cell align  = {left},
    clip mode          = individual]
    
    \addlegendimage{fom}
    \addlegendentry{FOM}
    
    \addlegendimage{symint, V, equi}
    \addlegendentry{\symintVequi}
    
    \addlegendimage{genint, V, equi}
    \addlegendentry{\genintVequi}
    
    \addlegendimage{pod, avg}
    \addlegendentry{\podavg}
  \end{axis}
\end{tikzpicture}%
  \tikzexternaldisable%

  \caption{First symmetric transfer function  of the Toda lattice example:
    Only the best performing methods from the time simulation are shown.
    All reduced-order models recover the transfer function behavior of the
    original system.
    For low frequencies, \podavg{} performs two order of magnitude worse than
    the interpolating methods.}
  \label{fig:todalattice_g1}
\end{figure}
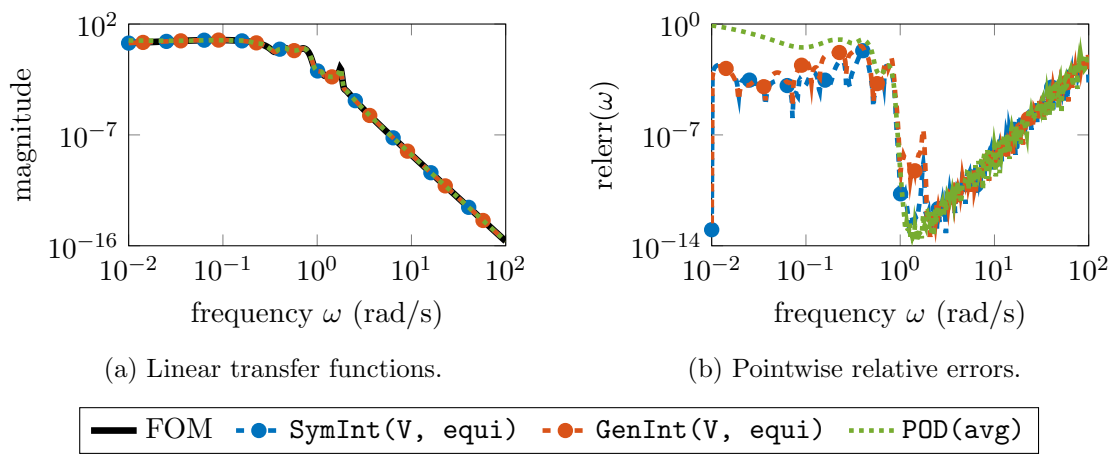

\begin{figure}[t]
  \centering
  \begin{subfigure}[b]{.49\textwidth}
    \centering
  \tikzexternalenable%
  \tikzsetnextfilename{todalattice_g2_symvequi}%
  \begin{tikzpicture}
  \begin{loglogaxis}[
    view   = {0}{90},
    width  = .675\textwidth,
    height = .4\textwidth,
    scale only axis,
    axis on top,
    xmin   = 1e-2,
    xmax   = 1e+2,
    ymin   = 1e-2,
    ymax   = 1e+2,
    xtick  = {1e-2, 1e-1, 1e+0, 1e+1, 1e+2},
    ytick  = {1e-2, 1e-1, 1e+0, 1e+1, 1e+2},
    xminorticks = false,
    yminorticks = false,
    xlabel = {frequency $\omega_{1}$ (rad/s)},
    ylabel = {frequency $\omega_{2}$ (rad/s)},
    ylabel style = {yshift = -.3em},
    scaled x ticks = false,
    x tick label style = {/pgf/number format/fixed}]
        
      \addplot graphics[xmin = 1e-2, xmax = 1e+2, ymin = 1e-2, ymax = 1e+2]
        {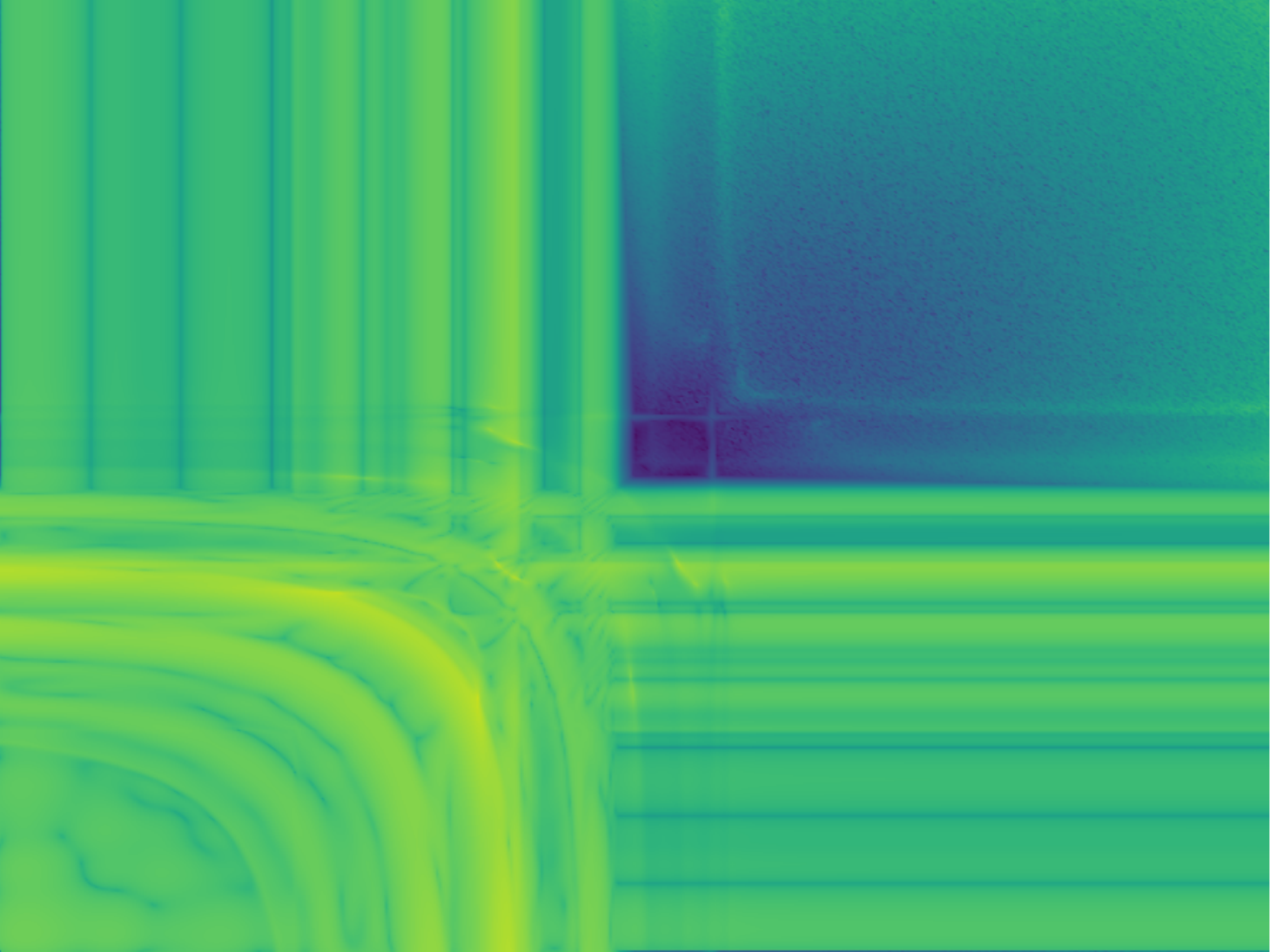};
            
  \end{loglogaxis}
\end{tikzpicture}%
  \tikzexternaldisable%

    \caption{\symintVequi{}.}
    \label{fig:todalattice_g2_symvequi}
  \end{subfigure}%
  \hfill%
  \begin{subfigure}[b]{.49\textwidth}
    \centering
  \tikzexternalenable%
  \tikzsetnextfilename{todalattice_g2_genvequi}%
  \begin{tikzpicture}
  \begin{loglogaxis}[
    view   = {0}{90},
    width  = .675\textwidth,
    height = .4\textwidth,
    scale only axis,
    axis on top,
    xmin   = 1e-2,
    xmax   = 1e+2,
    ymin   = 1e-2,
    ymax   = 1e+2,
    xtick  = {1e-2, 1e-1, 1e+0, 1e+1, 1e+2},
    ytick  = {1e-2, 1e-1, 1e+0, 1e+1, 1e+2},
    xminorticks = false,
    yminorticks = false,
    xlabel = {frequency $\omega_{1}$ (rad/s)},
    ylabel = {frequency $\omega_{2}$ (rad/s)},
    ylabel style = {yshift = -.3em},
    scaled x ticks = false,
    x tick label style = {/pgf/number format/fixed}]
        
      \addplot graphics[xmin = 1e-2, xmax = 1e+2, ymin = 1e-2, ymax = 1e+2]
        {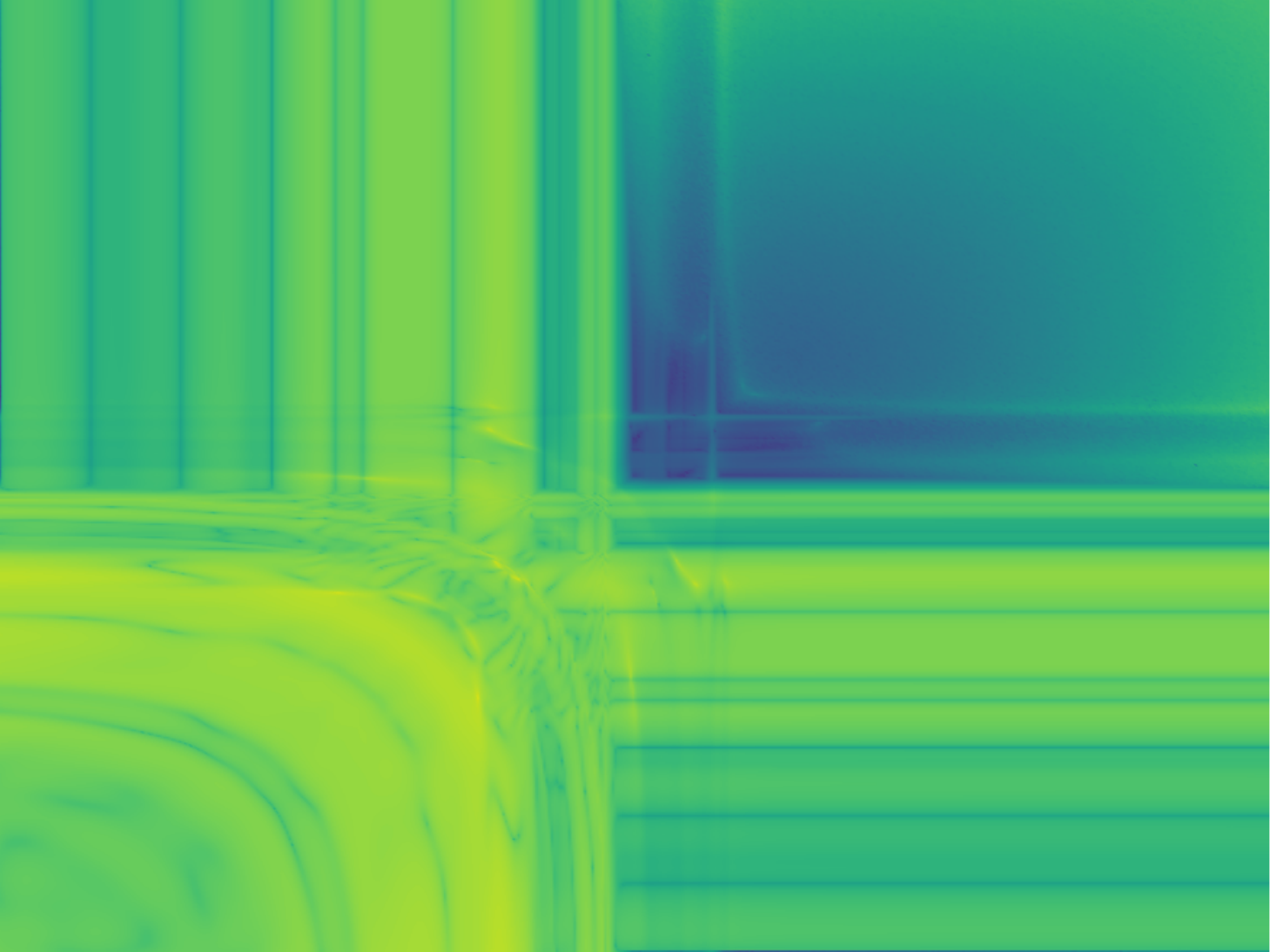};
            
  \end{loglogaxis}
\end{tikzpicture}%
  \tikzexternaldisable%

    \caption{\genintVequi{}.}
    \label{fig:todalattice_g2_genvequi}
  \end{subfigure}
  
  \vspace{.5\baselineskip}
  \begin{subfigure}{.49\textwidth}
    \centering
  \tikzexternalenable%
  \tikzsetnextfilename{todalattice_g2_podavg}%
  \begin{tikzpicture}
  \begin{loglogaxis}[
    view   = {0}{90},
    width  = .675\textwidth,
    height = .4\textwidth,
    scale only axis,
    axis on top,
    xmin   = 1e-2,
    xmax   = 1e+2,
    ymin   = 1e-2,
    ymax   = 1e+2,
    xtick  = {1e-2, 1e-1, 1e+0, 1e+1, 1e+2},
    ytick  = {1e-2, 1e-1, 1e+0, 1e+1, 1e+2},
    xminorticks = false,
    yminorticks = false,
    xlabel = {frequency $\omega_{1}$ (rad/s)},
    ylabel = {frequency $\omega_{2}$ (rad/s)},
    ylabel style = {yshift = -.3em},
    scaled x ticks = false,
    x tick label style = {/pgf/number format/fixed}]
        
      \addplot graphics[xmin = 1e-2, xmax = 1e+2, ymin = 1e-2, ymax = 1e+2]
        {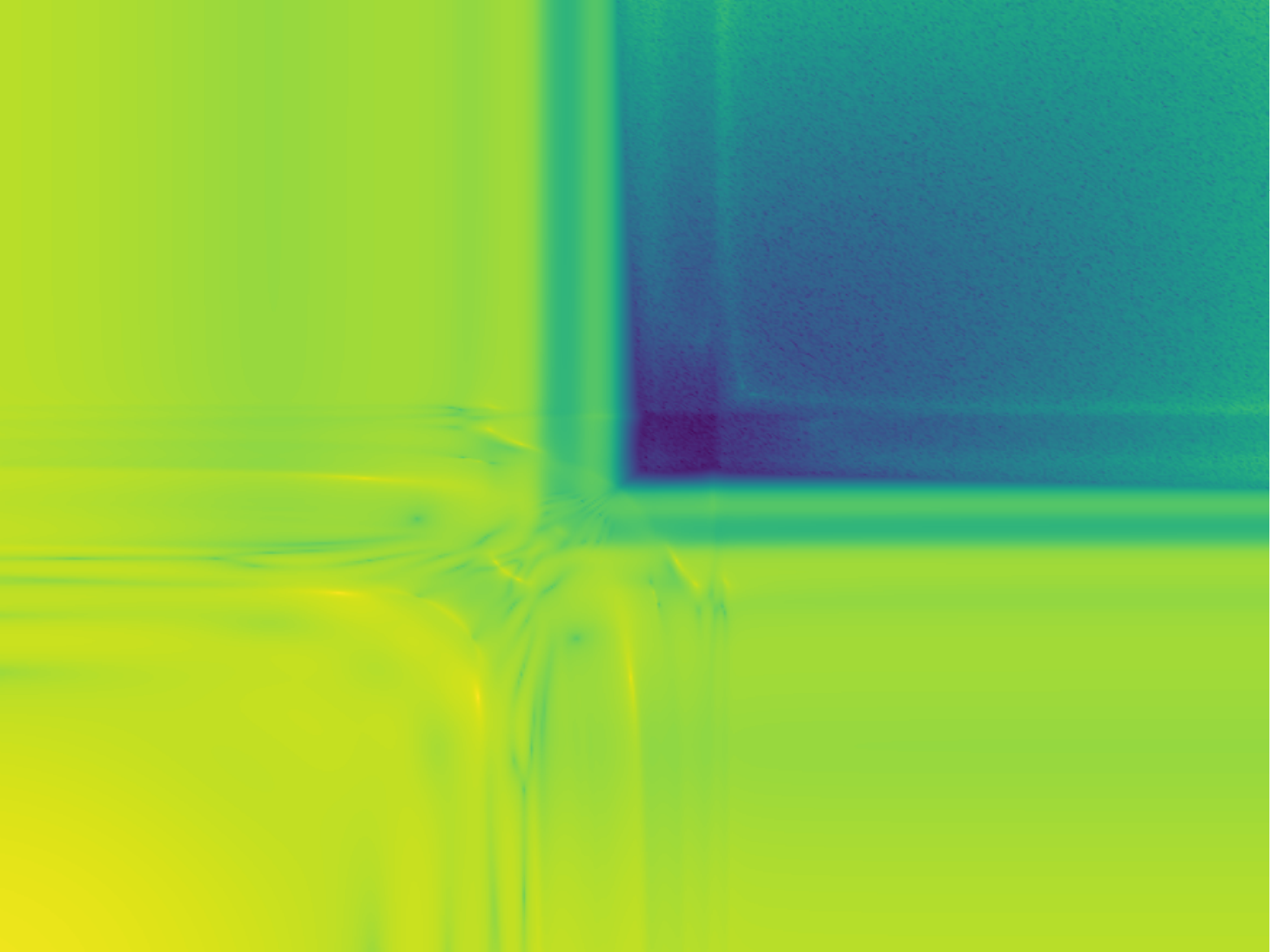};
            
  \end{loglogaxis}
\end{tikzpicture}%
  \tikzexternaldisable%

    \caption{\podavg{}.}
    \label{fig:todalattice_g2_podavg}
  \end{subfigure}%
  \hfill%
  \begin{subfigure}{.49\textwidth}
    \raggedleft
  \tikzexternalenable%
  \tikzsetnextfilename{todalattice_g2_legend}%
  \begin{tikzpicture}
  \node[draw = none, minimum width = 0cm, inner sep = 0cm](start){};
  \node(leg) at (start.north east) [anchor = north west]{\tikz
  \begin{axis}[%
    hide axis,
    scale only axis,
    width  = .8\textwidth,
    height = .1cm,
    point meta min = -13.7278,
    point meta max = 0.8981,
    colorbar,
    colorbar horizontal,
    colorbar style = {
      xticklabel = $10^{\pgfmathparse{\tick}
        \pgfmathprintnumber\pgfmathresult}$,
      at     = {(.5, 0)},
      anchor = north},
    scaled x ticks = false,
    x tick label style = {/pgf/number format/fixed}]
  \end{axis};};
  \node[draw = none, minimum width = 0cm, inner sep = 0cm](end)
    at (leg.north east) [anchor = north west]{};
\end{tikzpicture}%
  \tikzexternaldisable%

    \vspace{2\baselineskip}
  \end{subfigure}
  
  \caption{Second symmetric transfer function relative approximation errors
    $\relerr(\omega_{1}, \omega_{2})$ of the Toda lattice example:
    Only the best performing methods from the time simulation are shown.
    For large frequencies, \symintVequi{} and \podavg{} perform equally
    well and better than \genintVequi{}, while for low frequencies, \podavg{}
    is an order of magnitude worse than the interpolating methods.}
  \label{fig:todalattice_g2}
\end{figure}

As second example, we consider the motion of particles in a one-dimensional
crystal structure described by the Toda lattice model from the introduction; see
\Cref{fig:todalattice}.
The original system with exponential nonlinearities~\cref{eqn:todalattice} can
be rewritten into quadratic-bilinear form by introducing the auxiliary variables
\begin{equation} \label{eqn:auxvar}
  z_{j}(t) = \begin{cases}
    e^{k_{j}(q_{j}(t) - q_{j + 1}(t))} - 1 & \text{for}~j < \ell,\\
    e^{k_{\ell}q_{\ell}(t)} - 1 & \text{for}~j = \ell.
  \end{cases}
\end{equation}
By differentiating~\cref{eqn:auxvar} twice, the Toda lattice model can be written
as a system of quadratic-bilinear ordinary differential equations of the form
\begin{equation} \label{eqn:todalatticesq}
  \begin{aligned}
    0 & = M \ddot{q}(t) + D \dot{q}(t) + K q(t) +
      \Hvv\big(\dot{q}(t) \otimes \dot{q}(t)\big) +
      \Hpv\big(q(t) \otimes \dot{q}(t)\big)\\
    & \quad{}+{} 
      \Hpp\big(q(t) \otimes q(t)\big) -
      \Np q(t) u(t) - \Bu u(t),\\
    y(t) & = \Cv \dot{q}(t),
  \end{aligned}
\end{equation}
with the dimensions as in~\cref{eqn:sqsys} and $m = 1$ input and $p=1$ output.
The exact parameterization of the matrices in~\cref{eqn:todalatticesq}
can be found in~\cite[Sec.~6.5]{Wer21}.
For our experiments, we use the same setup as in~\cite[Sec.~6.5]{Wer21}
with $\ell = 2\,000$ particles such that~\cref{eqn:todalatticesq} has the
order~$n = 4\,000$.

It has been observed in~\cite{Wer21} that the internal block structures of the
matrices in~\cref{eqn:todalatticesq} resulting from the original and auxiliary
variables should be preserved for stability of the reduced-order models.
Therefore, we follow the suggestion in~\cite[Sec.~6.5]{Wer21} and use the
\emph{split congruence transformation}
approach~\cite{Fre05, RitWBetal20, VanNDetal17}.
That is, given a basis matrix $V = \begin{bmatrix} V_{1}^{\herm} &
V_{2}^{\herm} \end{bmatrix}^{\herm} \in \C^{2 \ell \times r}$, we construct the
extended basis matrix
\begin{equation*}
  \tV = \begin{bmatrix} V_{1} & 0 \\ 0 & V_{2} \end{bmatrix} \in 
    \C^{2\ell \times 2r},
\end{equation*}
and similarly for a left basis matrix $W$.
The extended basis matrices are then used for model reduction by projection.
We apply this approach in our experiments to modify all projection basis
matrices computed as described in \Cref{subsec:setup}.
By construction, it holds that
\begin{equation*}
  \mspan(V) \subseteq \mspan(\tV).
\end{equation*}
Therefore, if $V$ was constructed to satisfy any subspace conditions in
\Cref{sec:interp} for interpolation, the basis matrix $\tV$ also satisfies these
conditions such that interpolation properties are preserved.

For the comparison in our experiments, we have chosen the reduced order of all
computed models to be $2 r = 120$.
The results are shown in \Cref{tab:todalattice}.
The best performing model in terms of time simulation error is \symintVequi{}
followed by its counterpart for the generalized transfer functions
\genintVequi{}.
None of the models resulting from a two-sided projection has a stable
time-domain simulation, which appears to be a consequence of loosing additional
mechanical properties by $V \neq W$.
Also none of the POD generated models performs stable in the time-domain
simulation.
Here, the large frequency domain errors indicate that the approximation quality
is not sufficient to approximate the system behavior well enough.
In frequency domain, we observe similarly to the previous numerical example
that \symintVWavg{} and \genintVWavg{} perform best in terms of
approximating the first symmetric subsystem transfer function.

The time-domain simulations of the full- and the best performing reduced-order
models from each method are shown in \Cref{fig:todalattice_time} in the time
interval $[0, 100]$\,s.
For the input signal, we have chosen the mean $\mu = 0$ and the smoothing
parameter $\varsigma = 2$.
\podavg{} performs visibly stable only until around $60$\,s, while
the other two models follow the system behavior over the complete time interval.
The pointwise relative errors in \Cref{fig:todalattice_time_relerr_best} reveal
\podavg{} to be more accurate than \symintVequi{} and \genintVequi{} for the
first half of the time interval before it assumes the same error level as the
other two methods and finally becomes unstable.
\symintVequi{} and \genintVequi{} have overall a similar error behavior, with
the errors of \genintVequi{} being mildly larger.

On the other hand, in frequency domain, we can observe a similar behavior
compared to the previous numerical example.
The results for the same reduced-order models that performed best in
time domain can be seen in \Cref{fig:todalattice_g1,fig:todalattice_g2},
with the frequency interval of interest
$\omega, \omega_{1}, \omega_{2} \in [10^{-3}, 10^{3}]$\,rad/s.
The POD generated models perform worst, with the exception of \genintVWequi{}.
The models based on oversampling generalized transfer functions perform
better than those based on oversampling the symmetric transfer functions due to
the additional information obtained from the nonlinear terms.
However, in this example we can observe that the oversampling procedure may
produce larger errors than exact interpolation.
In particular, models computed via \texttt{avg} provide worse
approximation errors for larger frequencies, where the transfer functions are
converging to zero, while the models with \texttt{equi} preserve the system
behavior due to the exact interpolation.
Further plots of the other methods in time and frequency domain can be found in 
the accompanying code package~\cite{supWer23b}.


\section{Conclusions}%
\label{sec:conclusions}

We have extended the structure-preserving interpolation framework to
quadratic-bilinear systems.
Based on two motivating structured examples, we have introduced 
the structured variants of the symmetric subsystem and generalized transfer
functions of quadratic-bilinear systems.
For both transfer function types, we provided subspace conditions enabling the
computation of interpolating structured reduced-order models by projection.
The theoretical findings are then used to compute structured reduced-order
models in two numerical examples.
The theory presented here can be applied to a much broader class of structures
than those used here for illustrations.

The numerical results suggest that the interpolation of symmetric transfer
functions provides more accurate reduced-order models than the generalized
transfer functions for the same reduced order.
This is most certainly a consequence of the restriction to only products of
system terms in the generalized transfer functions.
However, we have seen that when the basis matrices are first constructed by
oversampling and then compressed, the generalized transfer function
interpolation framework provides more accurate reduced-order models for the
same computational costs as for the symmetric transfer functions due to
additional information obtained from the nonlinear terms.
The authors of~\cite{GoyPB23} extend on this oversampling idea and the
definition of structured generalized transfer functions for quadratic-bilinear
systems from this paper to propose structure-preserving model reduction for
systems with polynomial nonlinearities based on the interpolation of
generalized transfer functions with at most one nonlinear component.
While this gives a first efficient approach to simulation-free model
reduction for polynomial systems, there are many open questions left.
One related to our observations in this work is the question whether exact
interpolation of transfer functions based on Volterra kernels may perform 
better than an oversampling procedure.
This needs a thorough investigation of interpolation conditions
for polynomial systems, which we will address in some future work.

Another transfer function type for quadratic-bilinear systems are regular
subsystem transfer functions.
These have been omitted in this paper since for the choice of identical
interpolation points, the projection spaces of regular and symmetric subsystem
transfer functions coincide.
The formulas for structured variants of regular transfer functions and
results on interpolation conditions will be presented in a separate 
work.

For simplicity of exposition, we have restricted the numerical experiments to
only logarithmically equidistant interpolation points on the imaginary axis.
While such a procedure is often sufficient in practice, the question of good 
or even optimal interpolation points remains open and crucial for the success
of such model reduction methods.
This is still an unresolved issue even in the case of structured linear systems
and needs further investigation in the future.


\section*{Acknowledgments}%
\addcontentsline{toc}{section}{Acknowledgments}

Parts of this work were carried out while Werner was at the Max Planck
Institute for Dynamics of Complex Technical Systems in Magdeburg, Germany.

Benner and Werner have been supported been supported by the German Research
Foundation (DFG) Research Training Group 2297 ``Mathematical Complexity
Reduction (MathCoRe)''.
The work of Gugercin is based upon work supported by the National Science
Foundation under Grant No. DMS--1819110.


\addcontentsline{toc}{section}{References}
\bibliographystyle{plainurl}
\bibliography{bibtex/myref}

\end{document}